\documentclass[11pt]{amsproc}
\usepackage[utf8]{inputenc}
\usepackage{amsthm}
\usepackage{amsmath}
\usepackage{ragged2e}
\usepackage{amssymb}
\usepackage[margin=1.5in]{geometry}
\usepackage[dvipsnames]{xcolor}
\usepackage{tikz}
\usetikzlibrary{hobby}
\usepackage{color}
\usepackage[colorlinks=true, pdfstartview=FitV, linkcolor=blue, citecolor=blue, urlcolor=blue]{hyperref}
\usepackage{comment}

\newcommand\N{\mathbb{N}}

\newtheorem{Thm}{Theorem}
\newtheorem{Lemma}[Thm]{Lemma}

\newtheorem{Cor}[Thm]{Corollary}

\newtheorem{Prop}[Thm]{Proposition}

\theoremstyle{remark}
\newtheorem{Rem}[Thm]{Remark}

\numberwithin{equation}{section}
\usepackage{mathtools}
\mathtoolsset{showonlyrefs}

\title[Rigidity in the Planar Ulam Floating Body Problem]{Rigidity in the Planar Ulam Floating Body Problem with Perimetral Density ~$\sigma=\tfrac16$}
\author{Oleg Asipchuk}
\address{Oleg Asipchuk,  University of Cincinnati,
	Department of Mathematical Sciences,
	Cincinnati, OH 45221, USA}
\email{asipchah@ucmail.uc.edu}

\author{Maksim Kosmakov}
\address{Maksim Kosmakov,  University of Cincinnati,
	Department of Mathematical Sciences,
	Cincinnati, OH 45221, USA}
\email{kosmakmm@ucmail.uc.edu}

\author{Pavel Zatitskii}
\address{Pavel Zatitskii,  University of Cincinnati,
	Department of Mathematical Sciences,
	Cincinnati, OH 45221, USA}
\email{zatitspl@ucmail.uc.edu}

\subjclass[2020]{Primary:  52A10, 52A38  
	Secondary classification: 34A12, 34A26}

\begin{document}
 
\begin{abstract}
We study the two-dimensional Ulam's floating body problem for convex domains with perimetral density $\sigma=\tfrac16$. Using the framework of Zindler carousels, we reduce the problem to a two-dimensional dynamical system associated with an inscribed equilateral hexagon. Our main result shows that the disk is the only convex domain floating in equilibrium in every position for this perimetral density. This  provides a new rigidity result for rational perimetral densities in the convex setting.
\end{abstract}

\maketitle

\section*{Introduction}

\begin{center}
    {\it Is a solid of uniform density which will float in water in every position a sphere?}
\end{center}
\begin{FlushRight}
The Scottish Book \cite{ScottishBook2015}, Problem 19
\end{FlushRight}

The two-dimensional version of this question concerns a cylinder of uniform density floating in every orientation with its axis parallel to the water surface. By Archimedes' law, the problem reduces to the geometry of the planar cross-section. In a classical paper \cite{Auerbach1938}, Auerbach showed that for density $\tfrac12$ the cross-section need not be a disk, and in fact need not even be convex. The corresponding planar curves are precisely the Zindler curves introduced earlier in \cite{Zindler1921}. Later, Montejano~\cite{Montejano1974} proved that for density $0$ the only solution is the disk.

If a planar domain $D$ of uniform density floats in equilibrium in every orientation, then each waterline cuts the boundary $\partial D$ into two arcs whose lengths are in a fixed ratio
$\sigma:(1-\sigma)$
The parameter $\sigma$ is called the \emph{perimetral density}. Thus the floating-body problem may be viewed as the problem of describing plane convex bodies with a prescribed perimetral density that float in equilibrium in every position.

The case $\sigma=\tfrac12$ is exceptional because it admits a large family of noncircular examples. For other values of the density, the picture is much more rigid. Bracho, Montejano, and Oliveros \cite{BrachoMontejanoOliveros2001} proved that for perimetral densities $\sigma=\tfrac13$ and $\sigma=\tfrac14$ the disk is the only convex floating body. They also obtained nonexistence results for $\sigma=\tfrac15$ and $\sigma=\tfrac25$ providing numerical evidence. On the other hand, Wegner~\cite{Wegner2003,Wegner2007} constructed noncircular examples for some other densities by perturbing the circular solution.

The problem of Ulam has also been studied in higher dimensions. We refer the reader to surveys \cite{RyaboginSurvey,ScottishBook2015}, as well as to \cite{Falconer1983,Ryabogin2022,Ryabogin2023,Schneider1970} and references therein. We also mention the classical works \cite{Poussin1925,Dupin1822,Zhukovsky1936}, which contain several foundational ideas in the subject.

In this paper we consider the next unresolved rational case in the convex setting, namely, the perimetral density $\sigma=\frac16$.
Our main result is the following.

\begin{Thm}\label{thm:main}
Let $K\subset \mathbb{R}^2$ be a strictly convex body with $C^1$ boundary. If $K$ floats in equilibrium in every position with perimetral density $1/6$, then $K$ is a disk.
\end{Thm}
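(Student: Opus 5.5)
We use the Zindler carousel reformulation. If $K$ floats in every position with perimetral density $\sigma$, let $L$ be the perimeter. For each arclength parameter $s$, the chord joining $\gamma(s)$ and $\gamma(s+L/6)$ is a waterline. The standard equilibrium conditions (Auerbach, Bracho--Montejano--Oliveros) say two things. First, all such chords have the same length $\ell$. Second, the midpoint of each chord moves parallel to the chord, equivalently the chord is tangent to its own envelope at its midpoint. For $\sigma=1/6$ the six points $\gamma(s+kL/6)$, $k=0,\dots,5$, form an inscribed closed hexagon. Its sides are consecutive waterline chords, so it is equilateral with side $\ell$. As $s$ varies, this hexagon moves with all six vertices traversing $\partial K$ at unit speed; this is a Zindler carousel.

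Describe the hexagon by the directions $\theta_k(s)$ of its sides, $k=0,\dots,5$, with vertices $v_{k+1}=v_k+\ell e^{i\theta_k}$ and closing condition $\sum_k e^{i\theta_k}=0$. The unit-speed condition $|v_k'|=1$ together with the midpoint condition determines the motion. Differentiating the midpoint relation shows that $v_k'$ and $v_{k+1}'$ are reflections of each other across the chord direction. Hence all vertex velocities are determined by the angles. Writing $v_k'=e^{i\phi_k}$, the reflection gives $\phi_{k+1}=2\theta_k-\phi_k$, and consistency of $v_{k+1}-v_k$ gives $\ell\theta_k'=\sin(\phi_{k+1}-\theta_k)$ up to sign. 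Going around the hexagon forces $\phi_6=\phi_0$, that is, $\sum_k(-1)^k\theta_k\equiv 0$ modulo $\pi$, while $\sum e^{i\theta_k}=0$ leaves few degrees of freedom. After quotienting by rotation we expect the state space to be effectively two-dimensional, parametrized for instance by two angle differences between alternate sides. The motion then becomes an autonomous planar ODE. The regular hexagon is an equilibrium, and it corresponds to the disk.

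The core of the argument is a global analysis of this planar system. We must show that the only trajectory that is periodic with the required period, and that yields a strictly convex $C^1$ curve with vertices returning after a $1/6$ shift, is the equilibrium. The plan has four steps:
\begin{itemize}
\item[(i)] Derive the explicit system and find a first integral or a symmetry. The reflection structure suggests a time-reversal symmetry, so a conserved quantity such as the area of the hexagon, or a combination of alternate-side angles, is plausible.
\item[(ii)] Use convexity: the vertices must appear in cyclic order and $\phi_k$ must increase monotonically. This confines the dynamics to a bounded region of the state space.
\item[(iii)] Show that every nonequilibrium trajectory in that region either leaves it or has a rotation number incompatible with the condition that vertex $v_k(s+L/6)=v_{k+1}(s)$. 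This shift condition means the hexagon configuration at time $s+L/6$ is the cyclically relabeled configuration at time $s$, which imposes a discrete $\mathbb{Z}_6$-equivariance on the orbit.
\item[(iv)] Conclude that the orbit is the equilibrium, so the hexagon is regular at all times, all $\theta_k'$ are equal and constant, and the curve has constant curvature, hence is a circle.
\end{itemize}

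The main obstacle is step (iii): excluding nontrivial periodic orbits, in the spirit of Wegner-type bifurcations, that happen to satisfy the relabeling condition. First we would linearize at the regular hexagon and compute the rotation frequency of nearby orbits. We would check that it never matches the frequency required by the $\mathbb{Z}_6$ shift, the analogue of the nonresonance seen for $\sigma=1/3$ and $1/4$. Then we would extend this globally, using the first integral to foliate the region by closed level curves and showing that the period (rotation) function along these level curves is monotone.
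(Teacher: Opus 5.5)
Your kinematic setup is correct. The reflection rule $\phi_{k+1}=2\theta_k-\phi_k$ and the resulting condition $\sum_k(-1)^k\theta_k\equiv 0 \pmod{\pi}$ hold, and the latter is exactly the paper's identity $x_1+x_3+x_5=x_2+x_4+x_6=2\pi$.

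\textbf{The gap is step (iii).} It is the entire content of the theorem, and you only announce it. The tool you propose for it, global monotonicity of the period along the level curves of the first integral, is not available. The paper notes that this monotonicity has only been observed numerically (already by Bracho, Montejano and Oliveros) and that no proof is known.

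Even granting monotonicity, it would not close the argument, because your ``required frequency'' involves the perimeter $L$, which is itself unknown. The shift condition only says that $L/2$ is an integer multiple of the orbit's period $T$. That links two unknowns, so to rule out an orbit you need an independent estimate of $L$, and your plan has none.

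The local check at the regular hexagon is also not a clean nonresonance statement. With side length $2$, the linearized period there is $4\pi/3$ and the disk has $L=4\pi$, so $L/T\to 3$, which is an integer. It is excluded only because central symmetry forces $L/T$ to be even, and you never derive central symmetry.

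In the paper, step (iii) is filled by a chain of explicit, deliberately crude estimates that avoids monotonicity entirely:
\begin{itemize}
\item[(a)] The alternating-sum identity plus the law of sines makes each inscribed hexagon centrally symmetric. The midpoint condition then fixes the center, so $\partial K$ is centrally symmetric. The dynamics reduce to $\dot x=\cos(x+y)-\cos y$, $\dot y=\cos x-\cos(x+y)$, with Hamiltonian $H=\sin x+\sin y-\sin(x+y)$, which is a quarter of the hexagon's area.
\item[(b)] Convexity confines the angles to $(\pi/2,\pi)$. Strict concavity of $H$ there forces $1+\sqrt2<H<3\sqrt3/2$ on every nonconstant closed orbit.
\item[(c)] With $u=(x+y)/2$ one has $\dot u^2=Q_H(u)$. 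Comparing $Q_H$ with parabolas through its zeros gives $\pi\sqrt{3/2}<T(H)<2\pi/\sqrt{(4-\sqrt2)/2}<6$.
\item[(d)] The energy bound gives $u_+<3\pi/4$, hence $r\le 1+\sqrt2$, hence $12\le L\le 2\pi(1+\sqrt2)$.
\end{itemize}
Since central symmetry makes $L/2$ an integer multiple of $T(H)$, these inequalities are incompatible for every nonconstant orbit. Without (a)--(d) or a genuine substitute, your proposal is a research program rather than a proof.
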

Our approach uses the framework of Bracho, Montejano, and Oliveros and reformulates the problem in terms of a Zindler  carousel. The carousel structure forces central symmetry and reduces the problem to a two-dimensional Hamiltonian system for two angle variables, whose first integral is the area of the inscribed hexagon. The proof of Theorem~\ref{thm:main} then follows from analysis of the corresponding dynamical system together with geometric constraints coming from convexity.

The paper is organized as follows. In Section~\ref{sec:Geom_str}, we discuss properties of floating bodies and Zindler carousels and fix the notations. In Section~\ref{sec:Hex}, we derive several auxiliary geometric relations and reduce the problem to a planar system of ODEs. Section~\ref{sec:Main_th} contains the proof of the main theorem. In Section~\ref{sec:rem}, we collect several concluding remarks. Appendix~\ref{app:bounds} contains the proofs of the technical estimates used in the main argument.

\medskip
\noindent
{\it Acknowledgments}.
The authors would like to thank Dmitry Ryabogin for pointing our attention to the question, for many insightful mathematical discussions, and for his valuable comments and suggestions that significantly improved this article.

\section{Geometric structure of Zindler carousels} 
\label{sec:Geom_str}
In this section, we introduce the necessary notation for our problem and present several results that will be used in the proof. 

We first recall the equivalence between convex floating bodies of rational perimetral density and Zindler carousels; see \cite{BrachoMontejanoOliveros2001,BrachoMontejanoOliveros2004}.
In the special case relevant for us, this equivalence takes the following form.

\begin{Thm}\label{thm:Zindler_car}
Consider a strictly convex body $K$. Let $P$ be its perimeter, and let $\gamma:\mathbb{R}/P\mathbb{Z}\to\mathbb{R}^2$ be a $C^1$-smooth parametrization of its boundary by arc length. Then the following properties are equivalent:

\begin{enumerate}
\item $K$ floats in equilibrium in every position with perimetral density $\sigma=\frac{1}{N}$.

\item There exists a constant $\ell$, $\ell>0$, independent of $t$, such that for every $t\in \mathbb{R}/P\mathbb{Z}$, the points
\begin{equation*}
v_i(t):=\gamma\!\left(t+\frac{(i-1)P}{N}\right),
\qquad i=1,\dots,N,
\end{equation*}
are the vertices of an inscribed equilateral $N$-gon  with side length $\ell$:
\begin{equation*}
|v_{i+1}(t)-v_i(t)|=\ell,
\qquad i=1,\dots,N,
\end{equation*}
where indices are understood modulo $N$.
\end{enumerate}
\end{Thm}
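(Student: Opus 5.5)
The plan is to go through the classical Auerbach--Bracho--Montejano--Oliveros characterization of floating in equilibrium, stated in terms of chords. Write $A=|K|$ and fix perimetral density $\sigma$. The first step is to recall that if $K$ floats in equilibrium in every position with density $\sigma$, then every waterline is a chord $[\gamma(s),\gamma(s')]$ cutting off a fixed proportion of the area. The equilibrium condition says that the center of mass of $K$ and the centroid of the submerged part lie on a common vertical line, i.e.\ on a line perpendicular to the waterline. Differentiating this condition along the one-parameter family of waterlines (parametrized by direction $\theta$) gives the standard facts:
\begin{itemize}
\item[(a)] the midpoint of the chord is the point where the chord touches the envelope of the family of waterlines;
\item[(b)] the chord length $\ell$ is constant;
\item[(c)] the arclength of the submerged boundary arc is constant, equal to $\sigma P$.
\end{itemize}
Item (c) is obtained because, by (a), the two endpoints move with speeds $\frac{\ell}{2}\,d\theta$ times the same sine factor, so both arc endpoints advance by equal arclength. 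Conversely, if all chords $[\gamma(t),\gamma(t+\sigma P)]$ have constant length $\ell$, then $|\gamma(t+\sigma P)-\gamma(t)|^2$ is constant. Differentiating gives $\langle \gamma(t+\sigma P)-\gamma(t),\,\gamma'(t+\sigma P)-\gamma'(t)\rangle=0$, i.e.\ the unit tangents at the two endpoints make equal angles with the chord. This is exactly the statement that the envelope touches at the midpoint. From there the area cut off is constant, since its derivative is $\tfrac12(|x-m|^2-|y-m|^2)\,d\theta=0$, and the equilibrium condition follows by reversing the computation. I would cite \cite{BrachoMontejanoOliveros2001,BrachoMontejanoOliveros2004} for these equivalences and only verify the derivative identities.

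Next, specialize to $\sigma=\tfrac1N$. By (c), the waterline chords are exactly $[\gamma(t),\gamma(t+P/N)]$ for $t\in\mathbb{R}/P\mathbb{Z}$, and each has length $\ell$ by (b). Applying this at the $N$ parameters $t+(i-1)P/N$ shows that every side $[v_i(t),v_{i+1}(t)]$ has length $\ell$. The vertices close up because $v_{N+1}(t)=\gamma(t+P)=v_1(t)$. Hence the points $v_i(t)$ form an inscribed equilateral $N$-gon with side $\ell$, which proves (1)$\Rightarrow$(2). For (2)$\Rightarrow$(1), we use only the side condition for $i=1$: the chord $[\gamma(t),\gamma(t+P/N)]$ has constant length for every $t$. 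The converse direction above then yields constant cut-off area and equilibrium, with perimetral density $1/N$ by construction. Strict convexity ensures that each direction $\theta$ corresponds to exactly one such chord, monotonically in $t$, so the family of chords really is the family of waterlines of all orientations.

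The main obstacle is justifying the monotone reparametrization between $\theta$ and $t$ and the differentiation with only $C^1$ regularity. Along the way one must use strict convexity to exclude degenerate chords and to ensure that the envelope point is interior to the chord. I would handle this by working with $t$ as the parameter throughout and differentiating only $|\gamma(t+P/N)-\gamma(t)|^2$ and area integrals, which require just $C^1$.
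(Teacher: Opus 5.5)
The paper gives no proof of this statement. It quotes the equivalence from Bracho--Montejano--Oliveros and only ever uses (1)$\Rightarrow$(2). Your sketch of that direction is the standard Dupin--Auerbach argument and is fine. (Strictly speaking, the equality of the two endpoint speeds in (c) uses (b), i.e.\ the equal tangent angles, not (a) alone.)

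The genuine gap is in (2)$\Rightarrow$(1), at ``the equilibrium condition follows by reversing the computation.'' Reversing the differential identities only gives local information. Let $a$ be the common area of the caps, $c(\theta)$ their centroids (the centres of buoyancy), $u(\theta)$ the unit normal of the chord pointing out of the cap, and $e=du/d\theta$. Dupin's formula $c'(\theta)=-\frac{\ell^3}{12a}\,e(\theta)$ with constant $\ell$ shows that $q:=c(\theta)+\frac{\ell^3}{12a}u(\theta)$ is a fixed point. So all the lines through $c(\theta)$ orthogonal to the waterlines meet at $q$. Equilibrium in every position requires these lines to pass through the centroid $g$ of $K$, so you still need $q=g$, and no differentiation can give it. For $U(\theta)=\langle g-c(\theta),u(\theta)\rangle$ one has $U''+U=\ell^3/(12a)$, so constant $\ell$ yields only $U(\theta)=\ell^3/(12a)+\langle g-q,u(\theta)\rangle$. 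Equilibrium at $\theta$ holds iff $U'(\theta)=\langle g-q,e(\theta)\rangle=0$. The vector $g-q$ is a constant of integration that your argument never controls.

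You therefore need a separate global argument, or a precise reference for exactly this point. For $\sigma=1/N$ the missing step can be stated concretely. Write $K$ as the inscribed $N$-gon $V(t)$ plus $N$ caps, each with first moment $a\,q-\frac{\ell^3}{12}u_i(t)$. Since $\sum_i u_i(t)=0$ for a closed equilateral polygon, you get $|K|\,g=|V|\,c_V+Na\,q$, where $c_V$ is the (necessarily constant) centroid of $V(t)$. Hence $q=g$ iff $c_V=g$. When $\gamma$ is centrally symmetric this holds trivially, since $V(t)$ and $K$ share the centre; that covers $N=4$ (rhombus plus Lemma~\ref{lem:constant_center}) and $N=6$ (Corollary~\ref{L-center}). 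For general $N$, e.g.\ odd $N$, nothing in your sketch gives it. So the real difficulty is identifying $q$ with the centroid, not the $C^1$ reparametrization issue you single out.
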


\begin{Rem}
For each $t$, the points $v_1(t),\dots,v_{N}(t)$ lie on the strictly convex curve $\gamma$ in cyclic order, and therefore form a convex inscribed $N$-gon. Moreover, the condition that the side lengths are independent of $t$ implies that, as $t$ varies, the midpoint $m_i(t):=(v_i(t)+v_{i+1}(t))/2$
of each side moves in a direction parallel to that side:
$\dot{m_i}(t)\parallel (v_{i+1}(t)-v_i(t))$.
\end{Rem}

Without loss of generality, we can assume that the side of the $N$-gon is $\ell=2$.
We denote by $x_i(t)$ the interior angle of the inscribed $N$-gon at the vertex $v_i(t)$. The carousel condition implies that the tangent lines to $\gamma$ at the two consecutive vertices $v_i(t)$ and $v_{i+1}(t)$ make equal angles with the side
$[v_i(t),v_{i+1}(t)]$.  We denote those angles as~$\alpha_i(t)$, see Figure~\ref{F-1}.

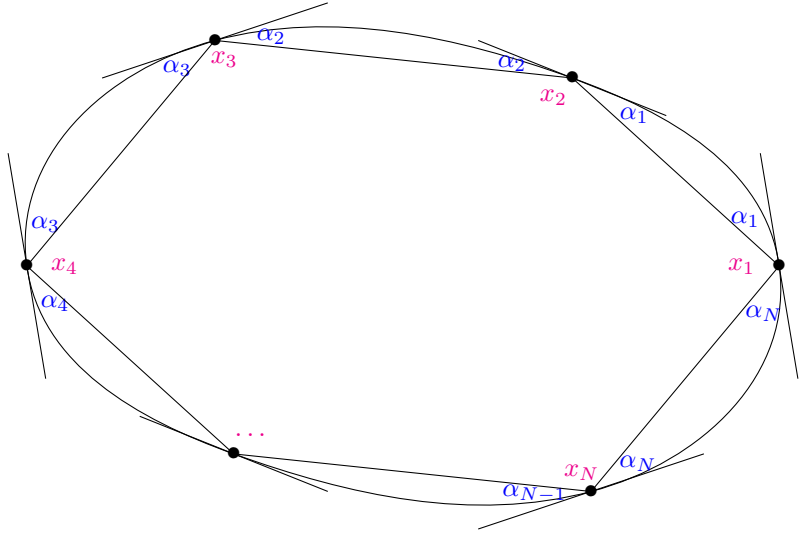
\begin{figure}
    \centering
    \begin{tikzpicture}[scale=2.5]
\coordinate (a) at (2,0);
\coordinate (b) at (0.9,1);
\coordinate (c) at (-1,1.2);
\coordinate (d) at (-2,0);
\coordinate (e) at (-0.9,-1);
\coordinate (f) at (1,-1.2);

\path[draw,use Hobby shortcut,closed=true]
(a)..(b)..(c)..(d)..(e)..(f);

\draw (a) node{$\bullet$};
\draw (b) node{$\bullet$};
\draw (c) node{$\bullet$};
\draw (d) node{$\bullet$};
\draw (e) node{$\bullet$};
\draw (f) node{$\bullet$};

\draw    (1.8,0) node [magenta]{{\small $x_1$}};
\draw    (0.8,0.9) node [magenta]{{\small $x_2$}};
\draw    (-0.95,1.1) node [magenta]{{\small $x_3$}};
\draw    (-1.8,0) node [magenta]{{\small $x_4$}};
\draw    (-0.8,-0.9) node [magenta]{{\small $\dots$}};
\draw    (0.95,-1.1) node [magenta]{{\small $x_N$}};

\draw    (1.82,0.25) node [blue]{{\small $\alpha_1$}};
\draw    (1.92,-0.25) node [blue]{{\small $\alpha_{N}$}};

\draw    (1.23,0.8) node [blue]{{\small $\alpha_1$}};
\draw    (0.58,1.08) node [blue]{{\small $\alpha_2$}};

\draw    (-0.7,1.22) node [blue]{{\small $\alpha_2$}};
\draw    (-1.2,1.05) node [blue]{{\small $\alpha_3$}};

\draw    (-1.85,-0.2) node [blue]{{\small $\alpha_4$}};
\draw    (-1.9,0.22) node [blue]{{\small $\alpha_3$}};

\draw    (0.7,-1.21) node [blue]{{\small $\alpha_{N-1}$}};
\draw    (1.25,-1.05) node [blue]{{\small $\alpha_{N}$}};

\draw (a)--(b)--(c)--(d)--(e)--(f)--(a);

\draw [black] (2,0) --(2.1,-0.6);
\draw [black] (2,0) --(1.9,0.6);

\draw [black] (0.9,1) --(0.4,1.2);
\draw [black] (0.9,1) --(1.4,0.8);

\draw [black] (-1,1.2) --(-0.4,1.4);
\draw [black] (-1,1.2) --(-1.6,1);

\draw [black] (-2,0) --(-2.1,0.6);
\draw [black] (-2,0) --(-1.9,-0.6);

\draw [black] (-0.9,-1) --(-0.4,-1.2);
\draw [black] (-0.9,-1) --(-1.4,-0.8);

\draw [black] (1,-1.2) --(0.4,-1.4);
\draw [black] (1,-1.2) --(1.6,-1);

\end{tikzpicture}
    \caption{A convex body $K$ with an inscribed $N$-gon.}
    \label{F-1}
\end{figure}

The following theorem is an adaptation of Theorem~6 in \cite{BrachoMontejanoOliveros2004}. 

\begin{Thm}\label{T-systemODE}
   Under the conditions of Theorem~\ref{thm:Zindler_car} with side length $\ell = 2$, the interior angles $x_i(t)$, $i = 1, \dots, N$, satisfy the following system of differential equations:
\begin{equation}\label{E-systemODE}
\dot{x}_i(t) = \sin(\alpha_{i-1}(t)) - \sin(\alpha_i(t)),
\end{equation}
where $\dot{x}=\dfrac{dx}{dt}$ denotes differentiation with respect to the arc-length parameter $t$.
\end{Thm}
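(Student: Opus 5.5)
The plan is to compute, for each side of the inscribed $N$-gon, the angular velocity of its direction. Then we express each interior angle $x_i$ through the turning of consecutive sides and differentiate. Orient $\gamma$ counterclockwise, so each vertex $v_i(t)$ moves with unit velocity $\dot v_i(t)=\gamma'(t+(i-1)P/N)$, tangent to $\partial K$. Write the side as $v_{i+1}-v_i=2u_i$, with $u_i=(\cos\theta_i,\sin\theta_i)$ a unit vector. Let $n_i$ be $u_i$ rotated by $+\pi/2$; because the orientation is counterclockwise, $n_i$ points into $K$. Since $|v_{i+1}-v_i|=2$ is constant, differentiating gives $\dot u_i=\tfrac12(\dot v_{i+1}-\dot v_i)$. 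Its component along $n_i$ is $\dot\theta_i$, so
\[
\dot\theta_i=\tfrac12\bigl(\langle \dot v_{i+1},n_i\rangle-\langle \dot v_i,n_i\rangle\bigr).
\]

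Next I pin down the position of the two unit tangents relative to the side. Differentiating $|v_{i+1}-v_i|^2=4$ gives $\langle \dot v_{i+1}-\dot v_i,u_i\rangle=0$. So both tangents make the same angle with $u_i$; this is the equal-angle property stated before the theorem, and the common angle is $\alpha_i\in(0,\pi)$. For each unit tangent this leaves two possibilities: it lies on one side of the chord or the other. Strict convexity decides which. The chord $[v_i,v_{i+1}]$ cuts off the arc of $\gamma$ from $v_i$ to $v_{i+1}$, which lies on the outer side of the chord, i.e.\ in the direction $-n_i$. At $v_i$ the curve leaves the chord toward the outside, so $\langle\dot v_i,n_i\rangle=-\sin\alpha_i$. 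At $v_{i+1}$ the curve returns from the outside, so $\langle\dot v_{i+1},n_i\rangle=+\sin\alpha_i$. I expect this sign bookkeeping to be the only delicate point. One must check that for a strictly convex curve the tangent at an endpoint cannot lie along the chord or on the inner side, and this is exactly what strict convexity guarantees. Substituting the two components gives $\dot\theta_i=\sin\alpha_i$.

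Finally, the polygon is convex and traversed counterclockwise, so the exterior angle at $v_i$ equals $\theta_i-\theta_{i-1}$ (mod $2\pi$, and locally a continuous function of $t$). Hence $x_i=\pi-(\theta_i-\theta_{i-1})$. Differentiating,
\[
\dot x_i=\dot\theta_{i-1}-\dot\theta_i=\sin\alpha_{i-1}-\sin\alpha_i,
\]
which is \eqref{E-systemODE}. The $C^1$ hypothesis makes all of these quantities differentiable in $t$. The indices are taken mod $N$, which is consistent with the cyclic labelling in Theorem~\ref{thm:Zindler_car}.
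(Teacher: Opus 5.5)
Your proof is correct. The paper gives no proof of this statement of its own; it only says the result is an adaptation of Theorem~6 in \cite{BrachoMontejanoOliveros2004}. So your computation is a self-contained replacement for that citation rather than an alternative to an argument in the text.

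Your route is the natural kinematic one:
\begin{itemize}
\item treat each side as a rigid rod of fixed length whose endpoints move with unit speed along $\partial K$;
\item compute the rod's angular velocity;
\item use $x_i=\pi-(\theta_i-\theta_{i-1})$.
\end{itemize}

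It has the advantage of showing exactly where each hypothesis enters. Constancy of the side length gives both the equal-angle property $\langle \dot v_i,u_i\rangle=\langle \dot v_{i+1},u_i\rangle$ and the relation $\dot u_i=\dot\theta_i n_i$. Convexity, through the arc from $v_i$ to $v_{i+1}$ lying on the outer side of the chord, fixes the signs $\langle\dot v_i,n_i\rangle=-\sin\alpha_i$ and $\langle\dot v_{i+1},n_i\rangle=\sin\alpha_i$. The normalization $\ell=2$ is precisely what removes the factor in the general identity $\dot\theta_i=\frac{2}{\ell}\sin\alpha_i$.

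Your setup also yields $\theta_i-\theta_{i-1}=\alpha_{i-1}+\alpha_i$, because $\dot v_i$ makes angle $\alpha_{i-1}$ with $u_{i-1}$ and angle $\alpha_i$ with $u_i$. This recovers the relation $x_i=\pi-\alpha_{i-1}-\alpha_i$ that the paper uses in the proof of Corollary~\ref{L-center}.

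Finally, the resulting formula is invariant under reversing the orientation, since both $\dot\theta_i$ and the exterior angles change sign. So your counterclockwise normalization loses no generality.
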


Now, we consider the case when $N$ is even: $N=2n$ for some $n\in\mathbb{N}$. We begin with a simple geometric observation. 
If for each $t$ the inscribed $2n$-gon $V(t)$ is centrally symmetric, then its center of symmetry is a priori allowed to depend on~$t$. 
The next lemma shows that the carousel midpoint condition forces this center to be constant, and hence the boundary curve itself is centrally symmetric.
 \begin{Lemma}\label{lem:constant_center} 
Let $\gamma:\mathbb{R}/P\mathbb{Z}\to\mathbb{R}^2$ be an arc-length parametrization of the boundary of a floating body with perimetral density $\sigma=\tfrac{1}{2n}$. Set
\begin{equation}
\mu:=\frac{P}{2n},
\qquad
v_i(t):=\gamma\bigl(t+(i-1)\mu\bigr),
\qquad i=1,\dots,2n.
\end{equation}
Assume that for every $t$, the inscribed $2n$-gon
$V(t)$ with the vertices $v_1(t),\dots,v_{2n}(t)$ is centrally symmetric. Then the boundary curve $\gamma$ is centrally symmetric, i.\,e., there exists a fixed point $c\in\mathbb{R}^2$ such that
\begin{equation}
\gamma\!\left(t+\frac{P}{2}\right)=2c-\gamma(t)
\qquad\text{for all }t\in \mathbb{R}/P\mathbb{Z}.
\end{equation}
\end{Lemma}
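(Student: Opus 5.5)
Define the center $c(t)$ of the centrally symmetric polygon $V(t)$. Since the vertices lie in cyclic order on a convex curve, central symmetry of a convex $2n$-gon sends each vertex to the opposite one, $v_{i+n}(t)=2c(t)-v_i(t)$. Hence
\[
c(t)=\tfrac12\bigl(v_i(t)+v_{i+n}(t)\bigr)\quad\text{for every } i,
\]
and the opposite sides $[v_i,v_{i+1}]$ and $[v_{i+n},v_{i+n+1}]$ are parallel translates of each other, related by the point reflection in $c(t)$. Since $\gamma$ is $C^1$, so are $v_i$ and therefore $c$. The goal is to show $\dot c(t)\equiv 0$. Then $c$ is a constant $c$, and taking $i=1$ gives $\gamma(t+n\mu)=\gamma(t+P/2)=2c-\gamma(t)$, which is the claim.

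To compute $\dot c$, I will use the midpoints $m_i(t)=(v_i+v_{i+1})/2$ from the Remark after Theorem~\ref{thm:Zindler_car}. By that Remark, $\dot m_i\parallel v_{i+1}-v_i$. Averaging the two expressions for $c$ (with indices $i$ and $i+1$) gives
\[
c=\tfrac12\bigl(m_i+m_{i+n}\bigr).
\]
Because the side $[v_{i+n},v_{i+n+1}]$ is antiparallel to $[v_i,v_{i+1}]$, both $\dot m_i$ and $\dot m_{i+n}$ are parallel to the direction $u_i(t)$ of side $i$. Therefore $\dot c\parallel u_i(t)$ for each $i=1,\dots,n$.

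Now $n\ge 2$, and consecutive sides of a strictly convex polygon are not parallel, so $u_1$ and $u_2$ are linearly independent. A vector parallel to both must vanish, so $\dot c(t)=0$ for all $t$. This is where $2n\ge4$ is used. For $n=1$ the statement is vacuous anyway, since two points are always centrally symmetric, and the claim there is the Zindler/Auerbach situation; the paper only needs $n=3$.

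The step needing the most care is the justification that the symmetry pairs $v_i$ with $v_{i+n}$, rather than with some other vertex. I will argue it as follows. A point reflection of a convex polygon maps its vertex set to itself and preserves the cyclic order, since it has determinant $+1$ and hence preserves orientation. So it acts as a rotation $i\mapsto i+k$ of the indices. Being an involution without fixed vertices (a fixed vertex would be the center, which lies in the interior), it satisfies $2k\equiv0 \pmod{2n}$ with $k\ne0$, which forces $k=n$. The rest is routine differentiation, legitimate by the $C^1$ regularity.
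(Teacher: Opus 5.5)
Your proof is correct and is essentially the paper's argument: write $c(t)=\tfrac12(m_i+m_{i+n})$, use the carousel condition to get $\dot c\parallel v_{i+1}-v_i$ for each $i$, and conclude $\dot c\equiv 0$ because $V(t)$ has non-parallel sides; your orientation argument, that the point reflection must pair $v_i$ with $v_{i+n}$, supplies a step the paper takes for granted. One small correction to a side remark: for $n=1$ the lemma is not vacuous but false, since the hypothesis holds automatically while a noncircular convex Zindler curve of density $\tfrac12$ violates the conclusion, so the condition $n\ge 2$ that your proof uses is genuinely necessary.
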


\begin{proof}
For each $t$, let $c(t)$ be the center of symmetry of the inscribed centrally symmetric $2n$-gon $V(t)$. Thus
\begin{equation}\label{eq:opp_vertices_short}
v_{i+n}(t)=2c(t)-v_i(t),
\qquad i=1,\dots,n,
\end{equation}
with indices understood modulo $2n$.
Let
\begin{equation}
e_i(t):=v_{i+1}(t)-v_i(t),
\qquad
m_i(t):=\frac{v_i(t)+v_{i+1}(t)}2
\end{equation}
be the side vectors and the side midpoints. From \eqref{eq:opp_vertices_short} we  get
\begin{equation}
e_{i+n}(t)=-e_i(t),
\qquad
m_{i+n}(t)=2c(t)-m_i(t),
\qquad i=1,\dots,n.
\end{equation}
We differentiate the second relation and get
\begin{equation}
2\dot c(t)=\dot m_i(t)+\dot m_{i+n}(t).
\end{equation}
Now we use the carousel midpoint condition: $\dot m_i(t)$ is parallel to $e_i(t)$ for every~$i$. Since $e_{i+n}(t)=-e_i(t)$, the vector $\dot m_{i+n}(t)$ is also parallel to $e_i(t)$. Thus,
$2\dot c(t)=\dot m_i(t)+\dot m_{i+n}(t)$ is parallel to $e_i(t)$ for every $i=1,\dots,n$.
Since $V(t)$ is a nondegenerate convex polygon, not all its side vectors $e_i(t)$ are parallel. Therefore the only vector parallel to all side vectors is the zero vector. This means $\dot c(t)=0$, and thus $c(t)\equiv c$ is constant. Taking $i=1$ in \eqref{eq:opp_vertices_short}, we get
\begin{equation}
\gamma(t+n\mu)=2c-\gamma(t).
\end{equation}
Since $\mu=\frac{P}{2n}$, we have $n\mu=\frac P2$, and thus
$\gamma\!\left(t+\frac P2\right)=2c-\gamma(t)$, which means $\gamma$ is centrally symmetric.
\end{proof}

\section{Properties of the inscribed hexagon}\label{sec:Hex}

We now specialize to the case $2n=6$, corresponding to the inscribed
equilateral hexagon for $\sigma=\tfrac16$. We first show that such a hexagon is centrally symmetric. This reduces the carousel dynamics to a two-dimensional Hamiltonian system, whose period estimate is then used in the proof of Theorem~\ref{thm:main}.

\subsection{Central symmetry}

    \begin{figure}[h]
    \centering
    \begin{tikzpicture}[scale=2.5]
\coordinate (a) at (2,0);
\coordinate (b) at (0.9,1);
\coordinate (c) at (-1,1.2);
\coordinate (d) at (-1.9,0);
\coordinate (e) at (-0.9,-1.1);
\coordinate (f) at (1,-1.4);

\draw (a) node{$\bullet$};
\draw (b) node{$\bullet$};
\draw (c) node{$\bullet$};
\draw (d) node{$\bullet$};
\draw (e) node{$\bullet$};
\draw (f) node{$\bullet$};

\draw    (1.8,0) node [magenta]{{\small $x_1$}};
\draw    (-0.95,1.1) node [magenta]{{\small $x_3$}};
\draw    (-0.8,-1) node [magenta]{{\small $x_5$}};

\draw    (0.8,0.9) node [blue]{{\small $\psi_1$}};
\draw    (-1.7,0) node [blue]{{\small $\psi_2$}};
\draw    (0.9,-1.2) node [blue]{{\small $\psi_3$}};

\draw    (1.45,0.6) node [magenta]{{\small $2$}};
\draw    (0,1.2) node [magenta]{{\small $2$}};
\draw    (-1.6,0.6) node [magenta]{{\small $2$}};
\draw    (-1.45,-0.7) node [magenta]{{\small $2$}};
\draw    (0,-1.35) node [magenta]{{\small $2$}};
\draw    (1.6,-0.7) node [magenta]{{\small $2$}};

\draw    (-0.6,-0.7) node [blue]{{\small $l_1$}};
\draw    (1.05,0) node [blue]{{\small $l_2$}};
\draw    (-0.6,0.65) node [blue]{{\small $l_3$}};

\draw (a)--(b)--(c)--(d)--(e)--(f)--(a);

\draw (b)--(d);
\draw (d)--(f);
\draw (f)--(b);

\end{tikzpicture}
    \caption{Convex hexagon $\mathcal{H}$.}
    \label{F-hexagon}
\end{figure}
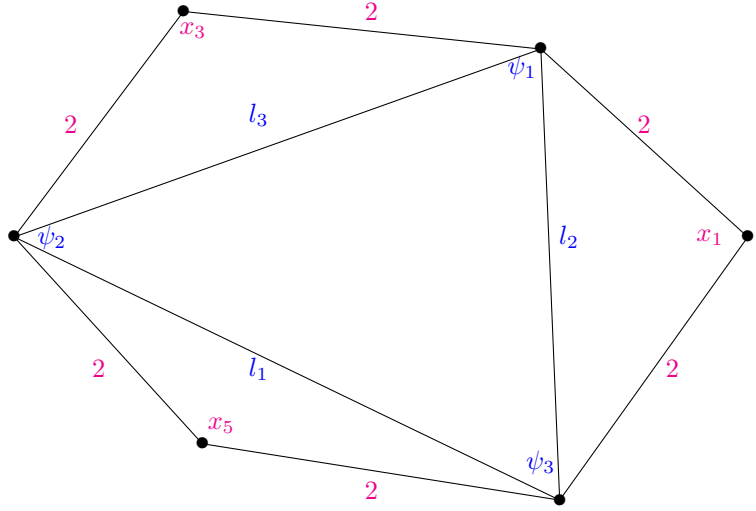

\begin{Lemma} \label{lem:sym_Hex}
Let $\mathcal{H}$ be a convex equilateral hexagon with interior angles
$x_1, \dots, x_6 \in\left(\frac{\pi}{2},\pi\right)$.
Assume that
\begin{equation}\label{eq:odd_even_sums_hexagon}
x_1+x_3+x_5=2\pi
\qquad
\text{and}
\qquad
x_2+x_4+x_6=2\pi.
\end{equation}

Then  $\mathcal{H}$ is centrally symmetric.
\end{Lemma}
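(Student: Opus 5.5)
The plan is to encode the hexagon by its unit side vectors and reduce central symmetry to an identity between two triples of complex numbers. Since all sides have length $2$, write the side vectors as $2u_k$, $u_k=e^{i\theta_k}$, with $e_k=v_{k+1}-v_k$ as in the proof of Lemma~\ref{lem:constant_center}. Traversing the convex hexagon counterclockwise, the exterior angle at $v_{k+1}$ is
\begin{equation}
\theta_{k+1}-\theta_k=\pi-x_{k+1}\in\left(0,\tfrac{\pi}{2}\right),
\end{equation}
with indices modulo $6$. Closure of the polygon gives $u_1+\dots+u_6=0$. Central symmetry of $\mathcal H$ is equivalent to $e_{k+3}=-e_k$, i.e.\ $u_{k+3}=-u_k$ for $k=1,2,3$. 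So the goal is to prove $u_4=-u_1$, $u_6=-u_3$ and $u_2=-u_5$.

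\textbf{Step 1: translate the angle condition.} Summing the exterior angles at $v_1,v_3,v_5$ gives
\begin{equation}
(\theta_1-\theta_6)+(\theta_3-\theta_2)+(\theta_5-\theta_4)=3\pi-(x_1+x_3+x_5)=\pi \pmod{2\pi}.
\end{equation}
Exponentiating, this says $u_1u_3u_5=-\,u_2u_4u_6$. (The even condition in \eqref{eq:odd_even_sums_hexagon} is in fact equivalent to the odd one, because the interior angles of a hexagon sum to $4\pi$; so it carries no extra information.)

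\textbf{Step 2: Vieta argument.} Set $a=u_1$, $b=u_3$, $c=u_5$ and $a'=-u_2$, $b'=-u_4$, $c'=-u_6$. These are six unit complex numbers with
\begin{equation}
a+b+c=a'+b'+c', \qquad abc=a'b'c',
\end{equation}
the first from closure and the second from Step~1. For unit numbers $\bar z=1/z$, so conjugating the first identity and multiplying by $abc=a'b'c'$ yields $ab+bc+ca=a'b'+b'c'+c'a'$. Hence the two triples have the same elementary symmetric functions. They are therefore the roots of the same cubic, so $\{u_1,u_3,u_5\}=\{-u_2,-u_4,-u_6\}$ as multisets.

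\textbf{Step 3: identify the matching.} This is where the hypothesis $x_i\in(\pi/2,\pi)$ enters, and it is the only step needing care. Adjacent directions differ by an exterior angle in $(0,\pi/2)$, which is never $\equiv\pi \pmod{2\pi}$. Hence $u_1\neq -u_2$ and $u_1\neq -u_6$, which forces $u_1=-u_4$. In the same way, $u_3$ cannot equal $-u_2$ or $-u_4$, so $u_3=-u_6$. The remaining element then gives $u_5=-u_2$.

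Thus $e_{k+3}=-e_k$ for all $k$. It follows that $v_{k+3}+v_k$ is independent of $k$: indeed,
\begin{equation}
(v_{k+4}+v_{k+1})-(v_{k+3}+v_k)=e_{k+3}+e_k=0.
\end{equation}
So all three main diagonals share a common midpoint, which is a center of symmetry of $\mathcal H$.
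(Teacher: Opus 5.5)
Your proof is correct, and it takes a genuinely different route from the paper.

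The paper works with the inner triangle $v_2v_4v_6$. Its sides are $4\sin(x_5/2)$, $4\sin(x_1/2)$, $4\sin(x_3/2)$. Since $x_1/2+x_3/2+x_5/2=\pi$, the law of sines (in effect SSS similarity) makes it similar to a triangle with angles $x_5/2,\,x_1/2,\,x_3/2$. Comparing these with its directly computed angles $x_2-x_5/2$, $x_4-x_1/2$, $x_6-x_3/2$ gives $x_{k+3}=x_k$. From this the paper asserts that opposite sides are parallel, and, being of equal length, they yield central symmetry.

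You instead encode closure and the angle condition as equal sums and equal products of the triples $\{u_1,u_3,u_5\}$ and $\{-u_2,-u_4,-u_6\}$. The unit-modulus identity $\bar z=1/z$ then upgrades this to equality of all elementary symmetric functions, so you reach $u_{k+3}=-u_k$ directly.

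Your route buys two things. First, it bypasses the two steps the paper states without proof: the formulas for the inner-triangle angles $\psi_i$, and the implication ``equal opposite angles $\Rightarrow$ parallel opposite sides''. Second, it isolates exactly what geometric input is needed beyond closure and the alternating angle condition: only that consecutive sides are never antiparallel. Your Step~3 uses nothing more than $x_i\not\equiv 0 \pmod{2\pi}$. So your remark that this is where $x_i\in(\pi/2,\pi)$ enters slightly overstates things; the lower bound $\pi/2$ plays no role there.

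The paper's argument stays within elementary Euclidean geometry. It relies on convexity to locate the diagonals inside the angles when computing the $\psi_i$. In exchange it produces the natural intermediate statement $x_{k+3}=x_k$ about the interior angles themselves.
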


\begin{proof}
Set $a=\frac{x_1}{2}, b=\frac{x_3}{2}$, and $c=\frac{x_5}{2}$.
 By \eqref{eq:odd_even_sums_hexagon} we have $a+b+c=\pi$.
One can show that  the angles of the interior triangle from Figure~\ref{F-hexagon} are
\begin{equation}\label{eq:psi_hexagon}
\psi_1=x_2-\frac{x_5}{2},
\qquad
\psi_2=x_4-\frac{x_1}{2},
\qquad
\psi_3=x_6-\frac{x_3}{2},
\end{equation}
and the corresponding side lengths are 
\begin{equation}
    l_1=4 \sin c,\quad  l_2=4 \sin a, \quad  l_3=4 \sin b.
\end{equation}
Using the law of sines, we obtain
\begin{equation}\label{eq:sine_ratio_hexagon}
\frac{\sin\psi_1}{\sin c}
=
\frac{\sin\psi_2}{\sin a}
=
\frac{\sin\psi_3}{\sin b}.
\end{equation}
These identities together with the fact that $\psi_1+\psi_2+\psi_3=\pi=c+a+b$ give $\psi_1=c$, $\psi_2=a$, and~$\psi_3=b$. Indeed, one can construct a triangle with the angles $a$, $b$, and $c$, and note that it is similar to the triangle with the angles $\psi_1$, $\psi_2$, and $\psi_3$ because of~\eqref{eq:sine_ratio_hexagon}.
It follows that $x_2=x_5$, $x_4=x_1$, and $x_6=x_3$. For a convex hexagon, equality of opposite interior angles implies that the opposite sides are parallel. Since the hexagon is equilateral, opposite sides are also equal in length, so the three pairs of opposite vertices have the same midpoint. Therefore $\mathcal{H}$ is centrally symmetric.
\end{proof}

Combining Lemma~\ref{lem:constant_center} with Lemma~\ref{lem:sym_Hex}, we obtain the following corollary.
\begin{Cor}\label{L-center}
    If a convex body $K$ satisfies the conditions of Theorem~\ref{thm:Zindler_car} with density $\sigma=\frac{1}{6}$, then $K$ must be centrally symmetric.
\end{Cor}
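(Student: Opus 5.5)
The plan is to check, for every $t$, the hypotheses of Lemma~\ref{lem:sym_Hex} for the inscribed hexagon $V(t)$ given by Theorem~\ref{thm:Zindler_car} with $N=6$. Lemma~\ref{lem:sym_Hex} then makes $V(t)$ centrally symmetric for every $t$, and Lemma~\ref{lem:constant_center} with $n=3$ gives a fixed center $c$ such that $\gamma(t+P/2)=2c-\gamma(t)$. So $K$ is centrally symmetric. Two hypotheses need checking: the angle sums \eqref{eq:odd_even_sums_hexagon} and the bounds $x_i(t)\in(\pi/2,\pi)$.

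\emph{Angle sums.} Theorem~\ref{T-systemODE} gives
\begin{equation}
\frac{d}{dt}\bigl(x_1+x_3+x_5\bigr)=\sin\alpha_6-\sin\alpha_1+\sin\alpha_2-\sin\alpha_3+\sin\alpha_4-\sin\alpha_5 .
\end{equation}
This is not identically zero. I would use the shift structure instead. Since $v_i(t+\mu)=v_{i+1}(t)$ with $\mu=P/6$, we have $x_i(t+\mu)=x_{i+1}(t)$. Hence $S_{\rm odd}(t):=x_1+x_3+x_5$ satisfies $S_{\rm odd}(t+\mu)=S_{\rm even}(t)$ and $S_{\rm odd}+S_{\rm even}=4\pi$, so $S_{\rm odd}(t+\mu)=4\pi-S_{\rm odd}(t)$.

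That alone does not force $S_{\rm odd}\equiv 2\pi$, so I would add a geometric argument. Set $\beta_i:=\pi-x_i$, the turning angle of $V(t)$ at $v_i$. The closing condition for an equilateral hexagon with unit edge directions $u_1,\dots,u_6$ is $\sum u_i=0$. Splitting this sum into $u_1+u_3+u_5$ and $u_2+u_4+u_6$ gives some relations, but not the sum condition directly.

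The more promising route is the carousel condition that the tangent at $v_i$ makes equal angles $\alpha_{i-1}$ and $\alpha_i$ with the adjacent sides, so $x_i=\pi-\alpha_{i-1}-\alpha_i$. Then
\begin{equation}
S_{\rm odd}-S_{\rm even}=\sum_i(-1)^{i}\,(\alpha_{i-1}+\alpha_i)=0 ,
\end{equation}
because each $\alpha_j$ appears once with each sign. With $S_{\rm odd}+S_{\rm even}=4\pi$, this gives $S_{\rm odd}=S_{\rm even}=2\pi$, which is exactly \eqref{eq:odd_even_sums_hexagon}. I expect this telescoping step to be the clean core of the argument. The one thing to verify carefully is the sign convention for the $\alpha_i$ in Figure~\ref{F-1}: the tangent at $v_i$ splits the straight angle at $v_i$ into $\alpha_{i-1}$, $x_i$, $\alpha_i$. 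This holds by strict convexity, since the polygon lies on one side of the tangent line.

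\emph{Angle bounds.} Convexity gives $x_i<\pi$. The lower bound $x_i>\pi/2$ is the main obstacle. From $x_i=\pi-\alpha_{i-1}-\alpha_i$ it amounts to $\alpha_{i-1}+\alpha_i<\pi/2$. Two facts point in this direction: the sum condition with $x_j<\pi$ gives $x_1=2\pi-x_3-x_5>0$ only, and an equilateral hexagon with all sides $2$ inscribed in a convex curve cannot be too flat. I would check whether the proof of Lemma~\ref{lem:sym_Hex} really uses $x_i>\pi/2$. It seems to need only $a,b,c\in(0,\pi/2)$, i.e. $x_i<\pi$, and the triangle $v_2v_4v_6$ being nondegenerate with angles $\psi_j$ as in \eqref{eq:psi_hexagon}. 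So I would first try to rerun that proof under just convexity and the sum conditions. If the lower bound is genuinely needed, I would derive it by contradiction: if $x_1\le\pi/2$, then $x_3+x_5\ge 3\pi/2$, and I would show the hexagon's closure with equal sides fails.
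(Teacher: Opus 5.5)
Your proof is correct and is essentially the paper's: $x_i=\pi-\alpha_{i-1}-\alpha_i$ makes both alternating sums equal to $3\pi-\sum_j\alpha_j$, so $\sum_i x_i=4\pi$ yields \eqref{eq:odd_even_sums_hexagon}, and Lemma~\ref{lem:sym_Hex} followed by Lemma~\ref{lem:constant_center} (with $n=3$) finishes. Your worry about the hypothesis $x_i>\pi/2$ is well founded: the paper applies Lemma~\ref{lem:sym_Hex} without checking it, and its later justification \eqref{range_of_x} uses \eqref{E-xalpha}, i.e.\ central symmetry of $K$. Your primary fix is right, because the proof of that lemma only uses $0<x_i<\pi$ and convexity of the hexagon; your fallback would fail, however, since the convex, centrally symmetric equilateral hexagon with angles $0.4\pi,0.8\pi,0.8\pi,0.4\pi,0.8\pi,0.8\pi$ shows that closure cannot exclude an angle below $\pi/2$, and that bound only comes afterwards, from $\alpha_{i+3}=\alpha_i$.
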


\begin{proof}
    One has  $\sum_{i=1}^6 x_i=4\pi$ and 
        $x_i=\pi-\alpha_i-\alpha_{i-1}$, for $i=1,\ldots 6$. Thus,
$x_1+x_3+x_5=x_2+x_4+x_6=2\pi$.
Therefore we can apply Lemma~\ref{lem:sym_Hex} and then Lemma~\ref{lem:constant_center}.
\end{proof}

Under the central symmetry (see Figure~\ref{F-3}), the general system \eqref{E-systemODE} reduces to 
\begin{equation}\label{E-3by3}
    \begin{cases}
       \dot{x}_1(t) = \sin(\alpha_{3}(t)) - \sin(\alpha_1(t)),\\
        \dot{x}_2(t) = \sin(\alpha_{1}(t)) - \sin(\alpha_2(t)),\\
        \dot{x}_3(t) = \sin(\alpha_{2}(t)) - \sin(\alpha_3(t)).
    \end{cases}
\end{equation}

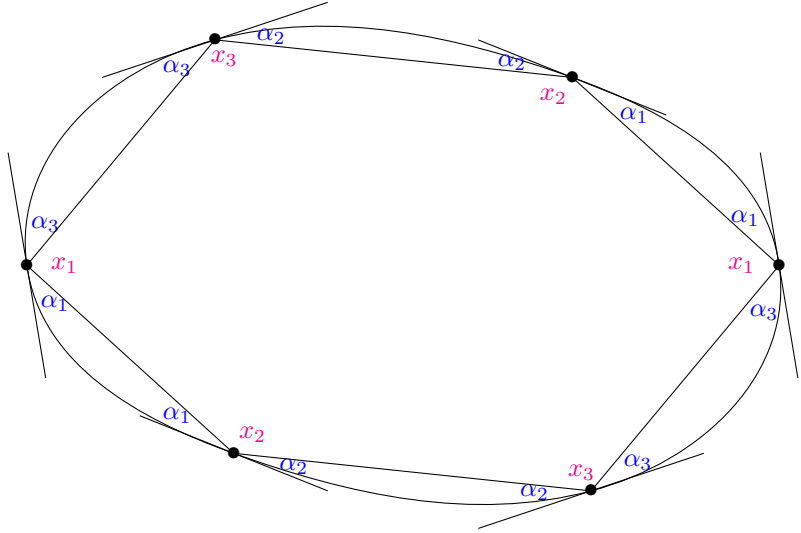
\begin{figure}[h]
    \centering
    \begin{tikzpicture}[scale=2.5]
\coordinate (a) at (2,0);
\coordinate (b) at (0.9,1);
\coordinate (c) at (-1,1.2);
\coordinate (d) at (-2,0);
\coordinate (e) at (-0.9,-1);
\coordinate (f) at (1,-1.2);

\path[draw,use Hobby shortcut,closed=true]
(a)..(b)..(c)..(d)..(e)..(f);

\draw (a) node{$\bullet$};
\draw (b) node{$\bullet$};
\draw (c) node{$\bullet$};
\draw (d) node{$\bullet$};
\draw (e) node{$\bullet$};
\draw (f) node{$\bullet$};

\draw    (1.8,0) node [magenta]{{\small $x_1$}};
\draw    (0.8,0.9) node [magenta]{{\small $x_2$}};
\draw    (-0.95,1.1) node [magenta]{{\small $x_3$}};
\draw    (-1.8,0) node [magenta]{{\small $x_1$}};
\draw    (-0.8,-0.9) node [magenta]{{\small $x_2$}};
\draw    (0.95,-1.1) node [magenta]{{\small $x_3$}};

\draw    (1.82,0.25) node [blue]{{\small $ \alpha_1$}};
\draw    (1.92,-0.25) node [blue]{{\small $ \alpha_3$}};

\draw    (1.23,0.8) node [blue]{{\small $ \alpha_1$}};
\draw    (0.58,1.08) node [blue]{{\small $ \alpha_2$}};

\draw    (-0.7,1.22) node [blue]{{\small $ \alpha_2$}};
\draw    (-1.2,1.05) node [blue]{{\small $ \alpha_3$}};

\draw    (-1.85,-0.2) node [blue]{{\small $ \alpha_1$}};
\draw    (-1.9,0.22) node [blue]{{\small $ \alpha_3$}};

\draw    (-1.2,-0.8) node [blue]{{\small $ \alpha_1$}};
\draw    (-0.58,-1.07) node [blue]{{\small $ \alpha_2$}};

\draw    (0.7,-1.21) node [blue]{{\small $ \alpha_2$}};
\draw    (1.25,-1.05) node [blue]{{\small $ \alpha_3$}};

\draw (a)--(b)--(c)--(d)--(e)--(f)--(a);

\draw [black] (2,0) --(2.1,-0.6);
\draw [black] (2,0) --(1.9,0.6);

\draw [black] (0.9,1) --(0.4,1.2);
\draw [black] (0.9,1) --(1.4,0.8);

\draw [black] (-1,1.2) --(-0.4,1.4);
\draw [black] (-1,1.2) --(-1.6,1);

\draw [black] (-2,0) --(-2.1,0.6);
\draw [black] (-2,0) --(-1.9,-0.6);

\draw [black] (-0.9,-1) --(-0.4,-1.2);
\draw [black] (-0.9,-1) --(-1.4,-0.8);

\draw [black] (1,-1.2) --(0.4,-1.4);
\draw [black] (1,-1.2) --(1.6,-1);

\end{tikzpicture}
    \caption{A central symmetric convex body $K$ with an inscribed hexagon.}
    \label{F-3}
\end{figure}

\subsection{Reduced Hamiltonian system}

First we show that  \eqref{E-3by3} can be reduced further to a $2\times2$ system of ODEs for two variables  $x_1(t)$ and $x_2(t)$.

\begin{Lemma}
    The internal angles of hexagon $x:=x_1(t)$ and $y:=x_2(t)$ satisfy the following system of ODEs:
    \begin{equation}\label{E-ODExy}
        \begin{cases}
            \dot{x}=\cos{(x+y)}-\cos{y},\\
            \dot{y}=\cos{x}-\cos{(x+y)}.
        \end{cases}
    \end{equation}
\end{Lemma}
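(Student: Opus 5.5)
The plan is to eliminate the tangent angles $\alpha_1,\alpha_2,\alpha_3$ from the reduced system \eqref{E-3by3}. They can be written explicitly in terms of $x$ and $y$, because the vertex relations $x_i=\pi-\alpha_i-\alpha_{i-1}$ form a linear system for the $\alpha$'s. That system becomes invertible once we also use the angle-sum constraint coming from central symmetry.

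\textbf{Step 1: the angle constraint.} By Corollary~\ref{L-center}, $K$ is centrally symmetric, so the inscribed hexagon is centrally symmetric as well. Hence $x_4=x_1$, $x_5=x_2$, $x_6=x_3$, and correspondingly $\alpha_{i+3}=\alpha_i$, as in Figure~\ref{F-3}. Since the interior angles of a hexagon sum to $4\pi$, we get $2(x_1+x_2+x_3)=4\pi$, that is,
\[
x_3=2\pi-x-y .
\]

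\textbf{Step 2: solve for the $\alpha$'s.} The relations $x_i=\pi-\alpha_i-\alpha_{i-1}$ (used in the proof of Corollary~\ref{L-center}), with indices read modulo $3$ so that $\alpha_0=\alpha_3$, give
\[
\alpha_1+\alpha_3=\pi-x,\qquad \alpha_1+\alpha_2=\pi-y,\qquad \alpha_2+\alpha_3=\pi-x_3=x+y-\pi .
\]
Adding the three equations gives $\alpha_1+\alpha_2+\alpha_3=\tfrac{\pi}{2}$. Subtracting each equation from this total yields
\[
\alpha_1=\tfrac{3\pi}{2}-x-y,\qquad \alpha_2=x-\tfrac{\pi}{2},\qquad \alpha_3=y-\tfrac{\pi}{2}.
\]

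\textbf{Step 3: substitute.} From the formulas above,
\[
\sin\alpha_1=-\cos(x+y),\qquad \sin\alpha_2=-\cos x,\qquad \sin\alpha_3=-\cos y .
\]
Substituting into the first two equations of \eqref{E-3by3} gives
\[
\dot x=\sin\alpha_3-\sin\alpha_1=\cos(x+y)-\cos y,
\]
\[
\dot y=\sin\alpha_1-\sin\alpha_2=\cos x-\cos(x+y),
\]
which is \eqref{E-ODExy}.

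As a consistency check, the third equation of \eqref{E-3by3} gives $\dot x_3=\sin\alpha_2-\sin\alpha_3=\cos y-\cos x$. This equals $-\dot x-\dot y$, matching $x_3=2\pi-x-y$, so no extra constraint arises.

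\textbf{Main obstacle.} The only delicate point is bookkeeping: we must confirm that the labeling of the $\alpha_i$ at each vertex in Figure~\ref{F-3} agrees with the convention $x_i=\pi-\alpha_i-\alpha_{i-1}$, and that the reduction of \eqref{E-systemODE} to \eqref{E-3by3} identifies $\alpha_0$ with $\alpha_3$. A wrong index would swap the roles of $\cos x$ and $\cos y$. I would check this against Figure~\ref{F-3} at the vertices $v_1$ and $v_2$ before doing the substitution.
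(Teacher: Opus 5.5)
Your proposal is correct and follows the paper's proof essentially verbatim. The paper also uses $x_1+x_2+x_3=2\pi$ to invert the linear vertex relations, obtaining $\alpha_i=x_{i+2}-\frac{\pi}{2}$, which are exactly your formulas for $\alpha_1,\alpha_2,\alpha_3$. It then substitutes these into the first two equations of \eqref{E-3by3}, as you do.
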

\begin{proof}
 Recall that
    \begin{equation}\label{E-x123}
        x_1+x_2+x_3=2 \pi,
    \end{equation}
and we have the following relations between  $x_i$'s and $ \alpha_i$'s:
\begin{equation*}
    \begin{cases}
        x_1+ \alpha_1+ \alpha_3=\pi,\\
        x_2+ \alpha_2+ \alpha_1=\pi,\\
        x_3+ \alpha_3+ \alpha_2=\pi.
    \end{cases}
\end{equation*}
Solving this system and using \eqref{E-x123}, we get 
\begin{equation}\label{E-xalpha}
     \alpha_i=x_{i+2}-\frac{\pi}{2}, \qquad    i=1,2,3 .
\end{equation}
  Substituting the $ \alpha_i$'s into \eqref{E-3by3} and using \eqref{E-x123}, we obtain \eqref{E-ODExy}.
\end{proof}

We also note that, in the setting of Theorem~\ref{thm:Zindler_car}, the interior angles of the inscribed hexagon necessarily satisfy
\begin{equation}\label{range_of_x}
\frac{\pi}{2} < x_i(t) < \pi 
\qquad \text{for all } i.
\end{equation}
Indeed, by Theorem~\ref{thm:Zindler_car}, for each $t$ the vertices
$v_1(t),\dots,v_6(t)$ form a convex hexagon, and therefore 
$x_i(t)<\pi$ for all $i$. On the other hand, if $x_{i+2}(t)\le \frac{\pi}{2}$, then \eqref{E-xalpha} gives $\alpha_i(t)\le 0$, which contradicts the strict convexity of $K$.

\begin{Prop}\label{P-ODE}
The system of ODEs~\eqref{E-ODExy} with initial conditions $x(0)=x_0$ and $y(0)=y_0$ admits a unique solution. Moreover, the system is Hamiltonian with
\begin{equation}\label{E-Area}
H(x,y):=\sin x+\sin y-\sin(x+y)
\end{equation}
playing the role of the Hamiltonian. Namely,
\begin{equation}
\dot x=-\frac{\partial H}{\partial y},\qquad
\dot y=\frac{\partial H}{\partial x}.
\end{equation}
In particular, $H$ is constant along every orbit.
\end{Prop}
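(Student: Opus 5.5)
The plan is to verify the three claims directly from the explicit form of the right-hand side of \eqref{E-ODExy}.

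\textbf{Existence and uniqueness.} Write the vector field as
\[
F(x,y)=\bigl(\cos(x+y)-\cos y,\ \cos x-\cos(x+y)\bigr).
\]
It is real-analytic on $\mathbb{R}^2$, so it is locally Lipschitz. Its partial derivatives are sums of sines and are therefore bounded by $2$, which makes $F$ globally Lipschitz. The Picard--Lindel\"of theorem then gives a unique local solution through $(x_0,y_0)$. Global Lipschitz continuity, or simply $|F|\le 4$, rules out blow-up in finite time, so the solution extends uniquely to all $t\in\mathbb{R}$.

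\textbf{Hamiltonian structure.} Take $H(x,y)=\sin x+\sin y-\sin(x+y)$ and compute
\[
\frac{\partial H}{\partial x}=\cos x-\cos(x+y),\qquad
\frac{\partial H}{\partial y}=\cos y-\cos(x+y).
\]
Comparing with \eqref{E-ODExy} gives $\dot y=\partial H/\partial x$ and $\dot x=-\partial H/\partial y$, which is exactly the claimed form.

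\textbf{Conservation of $H$.} Along any solution,
\[
\frac{d}{dt}H(x(t),y(t))=H_x\dot x+H_y\dot y=-H_xH_y+H_yH_x=0,
\]
so $H$ is constant on every orbit.

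I would also add a remark justifying the name \eqref{E-Area}. The centrally symmetric hexagon with unit... rather, with side $2$ splits into two congruent quadrilaterals. A direct computation shows its area equals $4H(x,y)$, up to normalization, since $x_3=2\pi-x-y$. This identifies $H$ with the area of the inscribed hexagon. The remark is not needed for the proposition itself.

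There is no genuine obstacle here. The only point needing care is the sign convention in the Hamiltonian equations, which the derivative computation above settles.
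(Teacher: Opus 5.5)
Your proposal is correct and follows the paper's argument: existence and uniqueness come from the regularity of the right-hand side, and the Hamiltonian form and the conservation of $H$ come from direct differentiation. Your global Lipschitz bound additionally gives solutions defined for all $t$, which the paper neither claims nor needs, and your side remark $S=4H$ agrees with the paper's subsequent remark.
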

\begin{proof}
Since the right-hand side of \eqref{E-ODExy} is smooth, the system admits a
unique local solution for every initial condition. The Hamiltonian form follows immediately from \eqref{E-ODExy}:
\begin{equation}
-\frac{\partial H}{\partial y}=-\cos y+\cos(x+y)=\dot x,
\qquad
\frac{\partial H}{\partial x}=\cos x -\cos(x+y)=\dot y.
\end{equation}
\end{proof}

\begin{Rem}
One can show that
$H=\frac{S}{4}$, where $S$ denotes the area of the inscribed hexagon. The fact that this area of the Zindler's $N$-gon is preserved is classical and goes back to Auerbach~\cite{Auerbach1938}.
\end{Rem}

\begin{Prop}\label{P-boundsH}
Let
$H(x,y)=\sin x+\sin y-\sin(x+y)$,
and let
\begin{equation}
D:=\Bigl\{(x,y):\ \frac{\pi}{2}<x<\pi,\ \frac{\pi}{2}<y<\pi,\ x+y<\frac{3\pi}{2}\Bigr\}
\end{equation}
be the admissible region according to \eqref{range_of_x} and \eqref{E-x123}.
Then
\begin{equation}
H_0:=\max_{\partial D} H = 1+\sqrt2.
\end{equation}
Moreover, if a nonconstant closed orbit of the Hamiltonian system \eqref{E-ODExy} is contained in $D$, then  
\begin{equation}
H_0<H(x,y)<H_{\max}:=\frac{3\sqrt3}{2}.
\end{equation}
\end{Prop}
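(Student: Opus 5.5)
The plan has three steps: compute the maximum of $H$ on each edge of the triangle $\partial D$, locate the interior critical points of $H$, and then use the topology of level sets to trap closed orbits.

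\emph{Boundary maximum.} The boundary $\partial D$ is a triangle with three edges:
\begin{itemize}
\item $x=\tfrac{\pi}{2}$ with $y\in[\tfrac{\pi}{2},\pi]$;
\item $y=\tfrac{\pi}{2}$ with $x\in[\tfrac{\pi}{2},\pi]$;
\item $x+y=\tfrac{3\pi}{2}$ with $x,y\in[\tfrac{\pi}{2},\pi]$.
\end{itemize}
Note that $x=\pi$ forces $y\le\tfrac{\pi}{2}$, so the sides $x=\pi$ and $y=\pi$ reduce to corners.

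On $x=\tfrac{\pi}{2}$ we have $H=1+\sin y-\cos y = 1+\sqrt2\sin(y-\tfrac{\pi}{4})$. Its maximum is $1+\sqrt2$, attained at $y=\tfrac{3\pi}{4}$, which lies inside the edge. By the symmetry $H(x,y)=H(y,x)$, the edge $y=\tfrac{\pi}{2}$ gives the same value.

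On $x+y=\tfrac{3\pi}{2}$ we have $\sin(x+y)=-1$ and $\sin y=-\cos x$. Hence $H=1+\sin x-\cos x$, with $x\in[\tfrac{\pi}{2},\pi]$, which is again maximized at $x=\tfrac{3\pi}{4}$ with value $1+\sqrt2$. Therefore $H_0=1+\sqrt2$.

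\emph{Interior critical points.} Critical points of $H$ satisfy $\cos x=\cos(x+y)=\cos y$. In $D$ we have $x,y\in(\tfrac{\pi}{2},\pi)$, so $\cos x=\cos y$ gives $x=y$. Then $\cos(2x)=\cos x$ with $2x\in(\pi,\tfrac{3\pi}{2})$ forces $2x=2\pi-x$, i.e. $x=\tfrac{2\pi}{3}$. So the unique critical point in $D$ is $p=(\tfrac{2\pi}{3},\tfrac{2\pi}{3})$, the regular hexagon, with $H(p)=\tfrac{3\sqrt3}{2}$.

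At $p$ the Hessian is $H_{xx}=H_{yy}=-\sin x+\sin 2x=-\sqrt3$ and $H_{xy}=\sin 2x=-\tfrac{\sqrt3}{2}$. It is negative definite, so $p$ is a strict maximum. Since $H$ is continuous on the compact set $\bar D$ and attains at least $1+\sqrt2$ somewhere on $\partial D$, while $p$ is the only interior critical point, we get $\max_{\bar D}H=\tfrac{3\sqrt3}{2}>1+\sqrt2$. As $p$ is the unique maximizer, $H<\tfrac{3\sqrt3}{2}$ on $D\setminus\{p\}$. A nonconstant orbit avoids the equilibrium $p$, which gives the upper bound.

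\emph{Lower bound, the main obstacle.} Let $\Gamma\subset D$ be a nonconstant closed orbit with energy $h$. Since $\Gamma$ is a Jordan curve in the simply connected region $D$, its interior $U$ satisfies $\bar U\subset D$, and $H\equiv h$ on $\partial U=\Gamma$. A nonconstant $H$ on $\bar U$ cannot be identically $h$. It has no critical points in $U$ other than possibly $p$, so its extremum over $\bar U$ must be attained at $p$, and therefore $p\in U$.

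To show $h>H_0$, I would argue by contradiction and suppose $h\le H_0$. The superlevel set $\{H>h\}\cap D$ contains $p$ and, since $h\le H_0$, its closure reaches $\partial D$ near $(\tfrac{\pi}{2},\tfrac{3\pi}{4})$. Take a path from $p$ to that boundary point along which $H$ stays above $h$; the gradient ascent flow toward $p$ supplies one. Such a path must exit $U$ and so cross $\Gamma$, where $H=h$. This contradicts $H>h$ along the path.

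The delicate part is constructing this path rigorously. I would first try to show that every level set $\{H=h\}\cap\bar D$ with $h\le H_0$ intersects $\partial D$, using monotonicity of $H$ along rays from $p$. Equivalently, one checks that $H$ is strictly decreasing along each ray from $p$ to $\partial D$. This can be proved via concavity: $-H$ is a sum of $-\sin$ terms, and $\sin x$, $\sin y$, $-\sin(x+y)$ are each concave on $D$ because $x,y\in(\tfrac{\pi}{2},\pi)$ and $x+y\in(\pi,\tfrac{3\pi}{2})$. Hence $H$ is concave on the convex set $D$ and strictly concave in the directions that matter.

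Concavity makes each superlevel set $\{H>h\}$ convex, with a unique maximizer $p$. The region $U$ is then exactly the connected superlevel set $\{H>h\}$, and it lies compactly inside $D$ only when $h>\max_{\partial D}H=H_0$. This concavity argument is the cleanest route, and I would use it for the whole lower bound.
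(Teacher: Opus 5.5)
Your proposal is correct and follows essentially the paper's route. The same edge computations give $H_0=1+\sqrt2$, and the lower bound uses concavity of $H$ on $D$ (hence convex superlevel sets) together with the Jordan region $U$ bounded by the orbit; the differences are minor: you get concavity termwise ($\sin x$, $\sin y$, $-\sin(x+y)$ each concave on $D$) instead of via the Hessian determinant, and you phrase the contradiction as $U=\{H>h\}\cap D$ having closure that meets $\partial D$ when $h\le H_0$ (for $h=H_0$, use concavity along the segment from $p$ to $(\tfrac{\pi}{2},\tfrac{3\pi}{4})$), which is slightly cleaner than the paper's segment-crossing argument.
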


\begin{proof}
 
First we observe that $H$ is strictly concave on $D$. Indeed,
\begin{equation}
\nabla^2 H(x,y)=
\begin{pmatrix}
-\sin x+\sin(x+y) & \sin(x+y)\\
\sin(x+y) & -\sin y+\sin(x+y)
\end{pmatrix}.
\end{equation}
Since on $D$ one has
\begin{equation}
\sin x>0,\qquad \sin y>0,\qquad \sin(x+y)<0,
\end{equation}
it follows that 
\begin{equation}
\det \nabla^2H=
\sin x\sin y-\sin(x+y)\bigl(\sin x+\sin y\bigr)>0.
\end{equation}
Thus $\nabla^2H$ is negative definite on $D$, so $H$ is strictly concave. Moreover, the unique critical point in  $D$  
is
$
\left(\frac{2\pi}{3}, \frac{2\pi}{3}\right)
$, and thus   $
H_{\max} := H\left(\frac{2\pi}{3}, \frac{2\pi}{3}\right)=\frac{3\sqrt{3}}{2}
$.
\medskip

For the lower bound we first compute $H_0=\max_{\partial D}H$.
The boundary of $D$ has exactly three edges: $x=\frac{\pi}{2}$, $y=\frac{\pi}{2}$, and $x+y=\frac{3\pi}{2}$.
When $y=\frac{\pi}{2}$, we have
\begin{equation}
H\Bigl(x,\frac{\pi}{2}\Bigr)
=\sin x+1-\sin\Bigl(x+\frac{\pi}{2}\Bigr)
=1+\sin x-\cos x
=1-\sqrt2\cos\Bigl(x+\frac{\pi}{4}\Bigr)
\le 1+\sqrt2.
\end{equation}
By symmetry, the same bound holds on the edge $x=\frac{\pi}{2}$. When $x+y=\frac{3\pi}{2}$,  we have
\begin{equation}
H\Bigl(x,\frac{3\pi}{2}-x\Bigr)
=
\sin x+\sin\Bigl(\frac{3\pi}{2}-x\Bigr)-\sin\frac{3\pi}{2}
=
\sin x-\cos x+1
\le 1+\sqrt2.
\end{equation}
The equality is attained at $x=\frac{3\pi}{4}$, thus
\begin{equation}
H_0=\max_{\partial D}H=1+\sqrt2.
\end{equation}
Consequently, for every $c\in\mathbb R$, the superlevel set
\begin{equation}
\Omega_c:=\{(x,y)\in D:\ H(x,y)\ge c\}
\end{equation}
is convex (see~Figure~\ref{fig1} for visualization) and thus connected.
\begin{figure}
    \centering
    \includegraphics[width=0.5\linewidth]{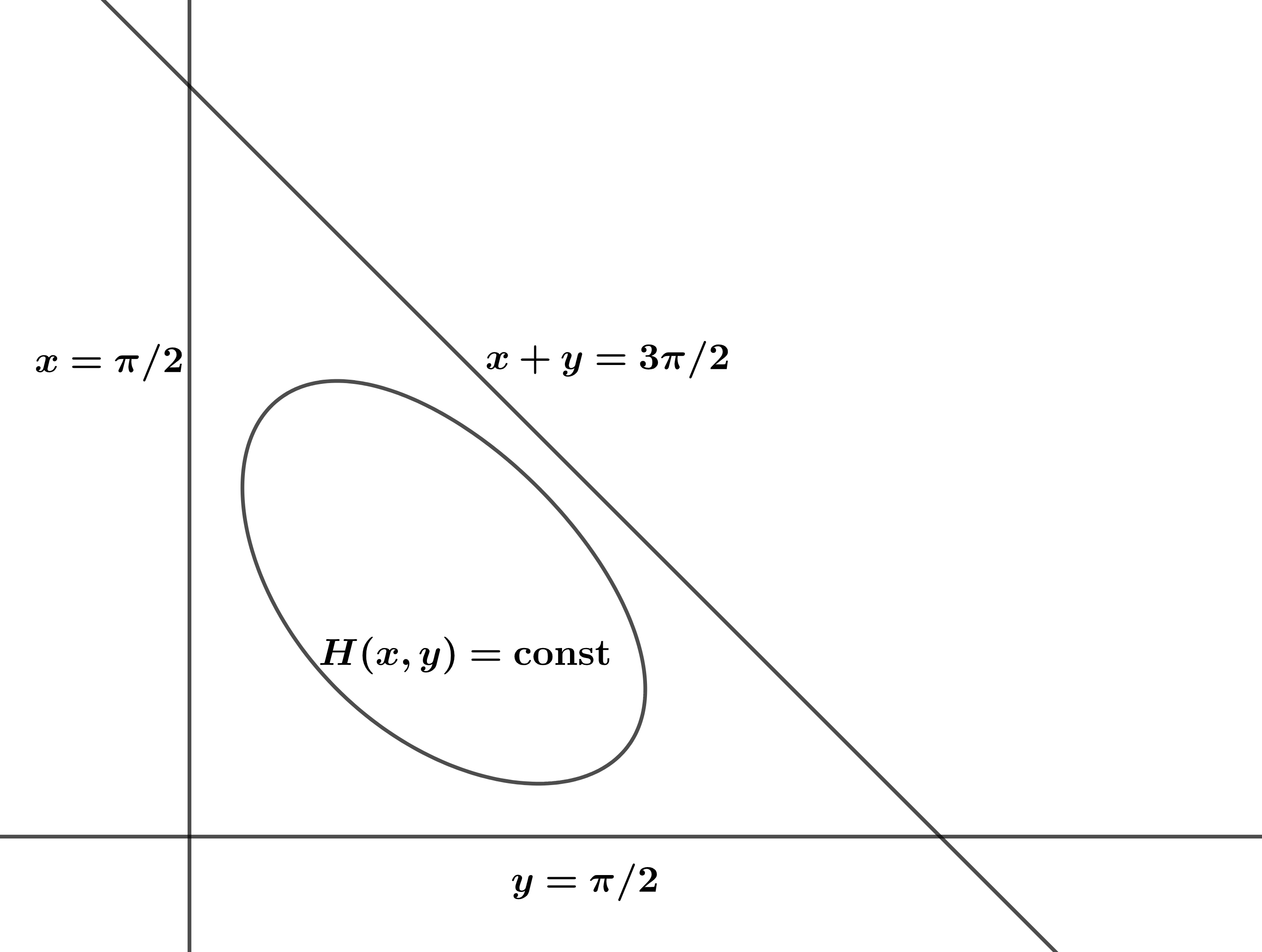}
    \caption{Level curve of $H$.}
    \label{fig1}
\end{figure}

Now let $\gamma\subset D$ be a nonconstant closed orbit, and let $H\equiv c$ on $\gamma$.
Since the only critical point of $H$ in $D$ is the strict maximum, a nonconstant closed orbit cannot reduce to that point, and therefore $c<H_{\max}$.

It remains to prove that $c>H_0$. Assume, for contradiction, that $c\le H_0$. Since
$H_0=\max_{\partial D}H$,
there exists a point $p\in\partial D$ such that $H(p)\ge c$. Thus the superlevel set $\Omega_c:=\{(x,y)\in D\colon H(x,y)\ge c\}$ meets $\partial D$.
On the other hand, the closed orbit $\gamma\subset\{H=c\}$ lies entirely inside the open domain $D$. By the Jordan curve theorem, $\gamma$ bounds a bounded planar region $U\subset D$. Since $H$ is constant on $\gamma$, and $H$ has only one interior critical point in $D$, namely the strict maximum, it follows that $H>c$ in $U$. Thus $U\subset \Omega_c$. We see that $\Omega_c$ contains points in $U$ and also points arbitrarily close to $\partial D$, thus outside $U$. Since $\Omega_c$ is convex, the line segment joining such a point in $U$ to such a point outside $U$ must lie entirely in $\Omega_c$. But any such segment must cross the boundary curve $\gamma$. This is impossible, because $\gamma\subset\{H=c\}$ is the boundary between the regions $\{H>c\}$ and $\{H<c\}$, so $\Omega_c=\{H\ge c\}\cap D$ cannot cross $\gamma$. The contradiction proves that
$c>H_0$.
\end{proof}

\subsection{One-dimensional reduction and period of $u(t)$}\label{subsec:1d_reduction}

Recall that along every solution $(x(t),y(t))$ of \eqref{E-ODExy}, the quantity
\begin{equation}\label{eq:H_level}
H =\sin x+\sin y-\sin(x+y)
\end{equation}
is conserved (see \eqref{E-Area}).
Introduce
\begin{equation} 
u:=\frac{x+y}{2} \qquad \text{and} \qquad v:=\frac{x-y}{2}.
\end{equation}
In these variables
\begin{equation}\label{eq:H_uv}
H=2\sin u\,(\cos v-\cos u)
\end{equation}
and
\begin{equation}\label{eq:cosv_formula}
\cos v=\cos u+\frac{H}{2\sin u}
=\frac{H+\sin(2u)}{2\sin u}.
\end{equation}
Summing the equations in \eqref{E-ODExy}, we get
\begin{equation}\label{eq:u_dot_from_xy}
\dot u=\frac{\dot x+\dot y}{2}=\frac{\cos x-\cos y}{2}
=\frac{\cos(u+v)-\cos(u-v)}{2}
=-\sin u\,\sin v.
\end{equation}
Using \eqref{eq:cosv_formula}, we have
\begin{equation}
\sin^2 v
=1-\cos^2 v
=1-\Bigl(\frac{H+\sin(2u)}{2\sin u}\Bigr)^2 =\frac{4\sin^2 u-(H+\sin(2u))^2}{4\sin^2 u}. \label{eq:sinv_sq}
\end{equation}
Therefore,
\begin{equation}\label{eq:scalar_reduction_QS}
\dot u^{\,2}
=\sin^2 u\,\sin^2 v
=\sin^2 u-\frac{(H+\sin 2u)^2}{4}
=:Q_H(u).
\end{equation}
Introducing
\begin{equation}\label{eqFGdef}
F(u):=2\sin u-\sin 2u \quad \text{and} \quad G(u):=2\sin u+\sin 2u,
\end{equation}
we rewrite 
\begin{equation}\label{eq:Q_factor_sec}
Q_H(u)=\frac{(F(u)-H)\,(G(u)+H)}{4}.
\end{equation}
On $(\pi/2,\pi)$ one has $\sin u>0$ and $1+\cos u>0$, thus $G(u)=2\sin u(1+\cos u)>0$, so $G(u)+H>0$ for all $H>0$.
Therefore, zeros of $Q_H$ in $(\pi/2,\pi)$ are exactly the solutions of $F(u)=H$. The following lemma provides necessary bounds for them when $H\in(H_0,H_{\max}) = (1+\sqrt2,\frac{3\sqrt3}{2})$.
\begin{Lemma}\label{LemUpm}
The function $F$ is increasing on $(\pi/2,2\pi/3)$ and is decreasing on $(2\pi/3,\pi)$. For each $H \in (H_0,H_{\max})$ the equation $F(u)=H$ has two simple solutions in $(\pi/2,3\pi/4)$,
\begin{equation}
u_-(H)\in\Bigl(\frac{\pi}{2},\frac{2\pi}{3}\Bigr)\quad \text{and} \quad
u_+(H)\in\Bigl(\frac{2\pi}{3},\frac{3\pi}4\Bigr),
\end{equation}
and $F(u)>H$ for $u \in (u_-(H),u_+(H))$. 
\end{Lemma}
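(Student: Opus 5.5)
The plan is a direct calculus argument for $F(u)=2\sin u-\sin 2u$, followed by the intermediate value theorem. First compute
\begin{equation}
F'(u)=2\cos u-2\cos 2u=-4\cos^2 u+2\cos u+2=-2(2\cos u+1)(\cos u-1).
\end{equation}
On $(\pi/2,\pi)$ we have $\cos u-1<0$. Hence $\operatorname{sign}F'(u)=\operatorname{sign}(2\cos u+1)$, which is positive for $u<2\pi/3$ and negative for $u>2\pi/3$. This gives the monotonicity claim. It also shows $F'(u)\neq 0$ away from $u=2\pi/3$, which will give simplicity of the roots.

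Next I evaluate $F$ at the relevant points:
\begin{equation}
F(\pi/2)=2,\qquad F(2\pi/3)=\sqrt3+\tfrac{\sqrt3}{2}=\tfrac{3\sqrt3}{2}=H_{\max},\qquad F(3\pi/4)=\sqrt2+1=H_0.
\end{equation}
Since $2<1+\sqrt2=H_0$, every $H\in(H_0,H_{\max})$ lies strictly between $F(\pi/2)$ and $F(2\pi/3)$. By strict monotonicity and continuity there is exactly one $u_-(H)\in(\pi/2,2\pi/3)$ with $F(u_-)=H$.

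On $(2\pi/3,3\pi/4)$, $F$ decreases strictly from $H_{\max}$ to $H_0$. So there is exactly one $u_+(H)\in(2\pi/3,3\pi/4)$ with $F(u_+)=H$. Moreover, on $[3\pi/4,\pi)$ we have $F\le H_0<H$, so there are no further roots in $(\pi/2,\pi)$. Both roots avoid $2\pi/3$, where $F=H_{\max}>H$, so $F'\neq0$ there and they are simple.

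Finally, for $u\in(u_-,2\pi/3]$ monotonicity gives $F(u)>F(u_-)=H$, and for $u\in[2\pi/3,u_+)$ it gives $F(u)>F(u_+)=H$. Together these yield $F>H$ on $(u_-(H),u_+(H))$.

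There is no real obstacle. The only point needing care is the comparison $F(\pi/2)=2<H_0$, which guarantees that the left root lies strictly inside $(\pi/2,2\pi/3)$, together with the observation that $F(3\pi/4)=H_0$ exactly, which pins the right root below $3\pi/4$.
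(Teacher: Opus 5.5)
Your proof is correct and follows essentially the same route as the paper: the sign of $F'(u)=2\cos u-2\cos 2u$, the values $F(\pi/2)=2<H_0=F(3\pi/4)$ and $F(2\pi/3)=H_{\max}$, and then monotonicity plus continuity. Your factorization $F'(u)=-2(2\cos u+1)(\cos u-1)$ and the explicit check $F'(u_\pm)\neq 0$ for simplicity fill in details that the paper leaves implicit.
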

\begin{proof}
We have $F'(u)=2\cos(u)-2\cos(2u)$, thus $F$ is increasing on $(\pi/2,2\pi/3)$ and is decreasing on $(2\pi/3,\pi)$. The maximum on $(\pi/2,\pi)$ is attained at $u_0=2\pi/3$ and is equal to $F(u_0)=H_{\max}$.
We also have $F(\pi/2)=2<H_0= F(3\pi/4)$. Together with monotonicity and continuity of $F$, this proves the claim.
\end{proof}
Since $Q_H$ is positive on $(u_-(H),u_+(H))$ and vanishes simply at the endpoints, the corresponding trajectory $u(t)$ is periodic and oscillates between the turning points $u_-(H)$ and $u_+(H)$. Along a monotone branch,
\begin{equation}
\dot u=\pm \sqrt{Q_H(u)},
\end{equation}
and therefore
\begin{equation}
dt=\frac{du}{\sqrt{Q_H(u)}}.
\end{equation}
It follows that the time needed to move from $u_-(H)$ to $u_+(H)$ is
\begin{equation}
\int_{u_-(H)}^{u_+(H)}\frac{du}{\sqrt{Q_H(u)}}.
\end{equation}
By time-reversal symmetry, the return from $u_+(H)$ to $u_-(H)$ takes the same time, so the full period is
\begin{equation}\label{eq:T_of_H}
T(H):=2\int_{u_-(H)}^{u_+(H)}\frac{du}{\sqrt{Q_H(u)}}.
\end{equation}

\begin{Prop}\label{prop:T_period}
For every $H\in(H_0,H_{\max})$ one has
\begin{equation}\label{eq:TS_two_sided_coarse}
\pi\sqrt{3/2}<\;T(H)\;<\;\frac{2\pi}{\sqrt{(4-\sqrt2)/2}}.
\end{equation}
\end{Prop}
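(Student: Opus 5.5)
The plan is to turn the period integral into an integral that can be computed exactly, by sandwiching the integrand between two explicit ones. By \eqref{eq:T_of_H} and \eqref{eq:Q_factor_sec},
\begin{equation}
T(H)=4\int_{u_-(H)}^{u_+(H)}\frac{du}{\sqrt{(F(u)-H)\,(G(u)+H)}} .
\end{equation}
The model integral I will compare against is
\begin{equation}
\int_{a}^{b}\frac{du}{\sqrt{(u-a)(b-u)}}=\pi .
\end{equation}
It therefore suffices to find constants $0<m\le M$, independent of $H\in(H_0,H_{\max})$, such that for $u\in(u_-(H),u_+(H))$
\begin{equation}
m\,(u-u_-)(u_+-u)\;\le\;(F(u)-H)(G(u)+H)\;\le\;M\,(u-u_-)(u_+-u).
\end{equation}
This gives $4\pi/\sqrt M\le T(H)\le 4\pi/\sqrt m$. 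The target constants are $M=32/3$ and $m=2(4-\sqrt2)$, with strict inequalities obtained from strictness at interior points.

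\emph{Step 1: the factor $F-H$.} On the relevant interval, $F''(u)=-2\sin u+4\sin 2u<0$. Let $k_1\le -F''\le k_2$ on that interval, and set
\begin{equation}
g(u)=F(u)-H-\tfrac{k}{2}(u-u_-)(u_+-u).
\end{equation}
Then $g$ vanishes at $u_\pm$ by Lemma~\ref{LemUpm}. Taking $k=k_1$ makes $g$ concave, so $g\ge0$; taking $k=k_2$ makes $g$ convex, so $g\le0$. Hence
\begin{equation}
\tfrac{k_1}{2}(u-u_-)(u_+-u)\le F(u)-H\le\tfrac{k_2}{2}(u-u_-)(u_+-u).
\end{equation}

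\emph{Step 2: the factor $G+H$.} We have $G'(u)=2(2\cos u-1)(\cos u+1)<0$ on $(\pi/2,\pi)$, so $G$ is decreasing there. Combined with $H\in(H_0,H_{\max})$, this gives two-sided bounds on $G+H$.

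\emph{Step 3: shrinking the interval.} The crude interval $(\pi/2,3\pi/4)$ from Lemma~\ref{LemUpm} is not good enough. There one has $-F''\in(2,4+\sqrt2)$ and $G+H\in(2\sqrt2,\,2+3\sqrt3/2)$. These give roughly $3.56$ for the lower bound and $4\pi/\sqrt{2.83}$ for the upper bound, both weaker than \eqref{eq:TS_two_sided_coarse}. So I will localize. Since $H>H_0$, the monotonicity in Lemma~\ref{LemUpm} gives
\begin{equation}
u_-(H)>u_*:=u_-(H_0),\qquad u_+(H)<3\pi/4=u_+(H_0).
\end{equation}
All estimates can then be taken on $I=[u_*,3\pi/4]$, with $u_*$ estimated numerically from $F(u_*)=1+\sqrt2$. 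On $I$, $-F''$ is bounded below by its value at an endpoint; this requires checking the shape of $2\sin u-4\sin2u$ on $I$, whose derivative is $2\cos u-8\cos 2u$. The upper bound for $-F''$ stays $\le 4+\sqrt2$, attained at $3\pi/4$.

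\emph{Step 4: correlating the two factors.} Rather than multiplying the worst cases of each factor separately, I will exploit that $G$ is largest where $u$ is small. Concretely, I will bound the product $(-F''/2)\cdot(G+H)$ pointwise, or split $I$ into a few subintervals. If the separate worst cases still do not reach $32/3$ and $2(4-\sqrt2)$, I will use $H\le F(u)$ on $[u_-,u_+]$ to replace $G+H$ by at most $G(u)+F(u)=4\sin u$. This is decreasing on $I$ and yields a better upper bound for $M$. For the lower bound $m$ I will use $H>H_0$ together with $G\ge G(3\pi/4)=\sqrt2-1$.

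The main obstacle is this numerical tightness: verifying that these localized, correlated constants actually meet the targets $32/3$ and $2(4-\sqrt2)$. This is an elementary but delicate trigonometric verification, of the kind the paper defers to Appendix~\ref{app:bounds}.
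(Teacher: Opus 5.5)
Your framework is the paper's. You sandwich $Q_H=\tfrac14(F-H)(G+H)$ between multiples of $(u-u_-)(u_+-u)$ by parabolic domination and then use $\int_{u_-}^{u_+}\frac{du}{\sqrt{(u-u_-)(u_+-u)}}=\pi$. Your targets $32/3$ and $2(4-\sqrt2)$ are exactly four times the paper's $8/3$ and $(4-\sqrt2)/2$.

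The half giving $m$ is also the paper's argument: $G+H>2\sqrt2$ plus a lower bound on $-F''$. Your localization to $I=[u_*,3\pi/4]$ works there, since $-F''(u_*)=4F(u_*)-6\sin u_*=4(1+\sqrt2)-6\sin u_*>4\sqrt2-2=m/\sqrt2$. It is not needed, though. The paper writes $F''=6\sin u-4F$ and uses $F\ge H>H_0$ on $[u_-,u_+]$ to get $-F''>4\sqrt2-2$ with no numerics.

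The half giving $M$ contains a concrete error. In Step 3 you claim $\max_I(-F'')=4+\sqrt2$, attained at $3\pi/4$. This is false: the derivative $2\cos u-8\cos 2u$ equals $-\sqrt2<0$ at $3\pi/4$. Writing $-F''=2\sin u\,(1-4\cos u)$, the maximum is interior, at $\cos u=(1-\sqrt{129})/16$, with value $k_2\approx 5.472$.

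Your first idea in Step 4, bounding $(-F''/2)(G+H)$ pointwise, is also not available. Parabolic domination gives $F-H\le \tfrac{k_2}{2}(u-u_-)(u_+-u)$ only with a single constant $k_2$ valid on the whole interval. So you cannot pair a local value of $-F''$ with a local value of $G+H$.

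Your fallback does survive the correction. Using $G+H\le F+G=4\sin u\le 4\sin u_*$ gives $M=2k_2\sin u_*\approx 2(5.472)(0.9695)\approx 10.61<32/3\approx 10.67$. The margin is below one percent, so it must be certified rigorously, for instance $u_*>1.81$ because $F(1.81)<1+\sqrt2$, together with the exact value of $k_2$. Your write-up does not do this, and because it relies on the wrong value $4+\sqrt2$ it would be checking the wrong constant.

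The paper avoids all of this by applying parabolic domination to $Q_H$ itself rather than to the factors. It computes $Q_H''=2(1+\cos 2u+H\sin 2u-2\cos^2 2u)$. Then, using $\sin 2u\le 0$ and $H\le F$ on $[u_-,u_+]$, it gets $Q_H''\ge 4\sin^2u\,(2\cos u+\cos 2u)\ge -16/3$. This is a one-variable inequality with room to spare (the true minimum is about $-5.17$). Differentiating the product is exactly the legitimate version of the ``correlation'' you were after.
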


\begin{proof}
Fix $H\in(H_0,H_{\max})$ and write $u_\pm=u_\pm(H)$.
We prove in Appendix~\ref{app:bounds} that 
\begin{equation}
\frac{4-\sqrt2}{2}\,(u-u_-)(u_+-u) \leq Q_H(u)\ \le\ \frac{8}{3}\,(u-u_-)(u_+-u)
\quad\text{for all }u\in[u_-,u_+].
\end{equation}
  Therefore, the statement of the proposition follows from the definition of $T(H)$ and the standard integral
\begin{equation}
 \int_{u_-}^{u_+}\frac{du}{\sqrt{(u-u_-)(u_+-u)}}
=  \pi. \qedhere
\end{equation}
\end{proof}

\subsection{Estimates for the radius vector}
We express the distance $r$ (the radius) from the origin $O$ to a vertex $R$ of the hexagon in terms of $x$ and $y$ (see Figure~\ref{F-Hexagon1}).  
\begin{figure}
    \centering
    \begin{tikzpicture}[scale=2.5]
\coordinate (a) at (2,0);
\coordinate (b) at (0.9,1);
\coordinate (c) at (-1,1.2);
\coordinate (d) at (-2,0);
\coordinate (e) at (-0.9,-1);
\coordinate (f) at (1,-1.2);

\draw (a)--(b)--(c)--(d)--(e)--(f)--(a);
\draw (b)--(f);
\draw (c)--(f);

\draw    (1.8,0) node [blue]{{\small $x$}};
\draw    (0.5,0.9) node [blue]{{\small $y+\frac{x}{2}-\frac{\pi}2$}};

\draw    (0.1,0.1) node [black]{{\small $O$}};

\draw    (1.45,0.7) node [black]{{\small $2$}};
\draw    (-0.5,0.5) node [black]{{\small $r$}};
\draw    (0.5,-0.5) node [black]{{\small $r$}};
\draw    (-1.45,-0.7) node [black]{{\small $2$}};
\draw    (0,1.2) node [black]{{\small $2$}};
\draw    (0,-1.2) node [black]{{\small $2$}};
\draw    (-1.7,0.7) node [black]{{\small $2$}};
\draw    (1.7,-0.7) node [black]{{\small $2$}};

\draw (0,0) node[circle,fill,inner sep=1.5pt] {};
\draw    (-0.96,1.26) node [black]{{\small $R$}};

\end{tikzpicture}
    \caption{A hexagon with side lengths equal to $2$.}
    \label{F-Hexagon1}
\end{figure}
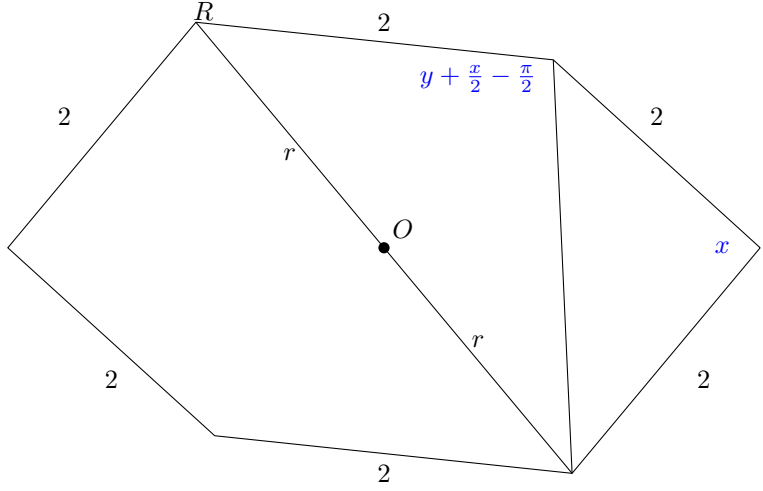
Using the properties of an isosceles triangle together with the Law of Cosines, we obtain
\begin{equation}
4r^2=4+16\sin^2{\frac{x}{2}} - 16\sin{\frac{x}{2}}\cos{\left(y+\frac{x}{2}-\frac{\pi}{2}\right)}.
\end{equation}
After simplifications using trigonometrical identities, we obtain 
\begin{equation} 
    r^2=3-2\cos{x}-2\cos{y}+2\cos{(x+y)}=1-4\cos{\frac{x+y}{2}}\Bigl(\cos{\frac{x-y}{2}}-\cos{\frac{x+y}{2}}\Bigr).
\end{equation}
Combining this with \eqref{eq:H_uv}, we rewrite it in terms of $u=\frac{x+y}{2}$ and $ v=\frac{x-y}{2}$: 
\begin{equation}\label{eq:r_u_v_system}
\begin{cases}
r^2 = 1 - 2H\cot u,\\ 
H = 2 \sin u \bigl(\cos v - \cos u\bigr).
\end{cases}
\end{equation}

\begin{Lemma}\label{Period_r} $T(H)$ is the minimal period of $r(t)$. \end{Lemma}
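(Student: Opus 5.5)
By \eqref{eq:r_u_v_system}, along an orbit with fixed $H$ we have $r^2=1-2H\cot u$. So $r(t)$ is a fixed function of $u(t)$, and the plan is to show that this function is strictly monotone on the range of $u$. Then $r$ and $u$ have exactly the same periods, and it remains to check that $T(H)$ is the minimal period of $u$.

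\textbf{Step 1: $r$ is a strictly increasing function of $u$.} By Proposition~\ref{P-boundsH}, a nonconstant closed orbit has $H\in(H_0,H_{\max})$, so in particular $H>0$. By Lemma~\ref{LemUpm}, $u(t)$ stays in $[u_-(H),u_+(H)]\subset(\pi/2,3\pi/4)$. On this interval $\cot u<0$, and $\frac{d}{du}\bigl(1-2H\cot u\bigr)=2H/\sin^2u>0$. Hence $r^2$, and therefore $r>0$, is a strictly increasing function $\phi(u)$ of $u$. We conclude that $r(t+\tau)=r(t)$ for all $t$ if and only if $u(t+\tau)=u(t)$ for all $t$. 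Thus the sets of periods of $r$ and $u$ coincide.

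\textbf{Step 2: the minimal period of $u$ is $T(H)$.} By the discussion after Lemma~\ref{LemUpm}, $u$ solves $\dot u^2=Q_H(u)$, and $Q_H$ has simple zeros at $u_\pm$ and is positive between them. So $u$ moves strictly monotonically from $u_-$ to $u_+$ in time $T/2$, then strictly monotonically back in time $T/2$. Consequently $T$ is a period of $u$. Now suppose $0<\tau<T$ were also a period. Within one period $[t_0,t_0+T)$ starting at a time $t_0$ with $u(t_0)=u_-$, the value $u_-$ is attained only at $t_0$. Indeed, $u$ is strictly monotone on each half and $u_-$ is the strict minimum. Hence $u(t_0+\tau)\neq u_-$, a contradiction. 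So $T(H)$ is the minimal period of $u$, and by Step~1 also of $r$.

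\textbf{Main obstacle.} The delicate point is justifying that $u(t)$ genuinely turns around at $u_\pm$ rather than stalling there. The equation $\dot u^2=Q_H$ alone allows a stationary solution $u\equiv u_\pm$, and the equation determines $u$ only up to sign. To rule this out I would work with the full system \eqref{E-ODExy}. From \eqref{eq:u_dot_from_xy} and \eqref{eq:sinv_sq}, we have $\dot u=-\sin u\sin v$, and $u=u_\pm$ exactly when $\sin v=0$, i.e.\ $x=y$.

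At such a point, we compute $\ddot u=-\sin u\cos v\,\dot v$. Here $\dot v=(\dot x-\dot y)/2=\cos(x+y)-\tfrac12(\cos x+\cos y)$, which at $x=y=u$ equals $\cos 2u-\cos u$. This is nonzero for $u\in(\pi/2,3\pi/4)$ with $u\ne 2\pi/3$. Moreover $u_\pm\ne 2\pi/3$, because $u=2\pi/3$ would force $H=H_{\max}$. Also $\cos v=1\neq0$ and $\sin u\neq 0$ there. Hence $\ddot u\neq0$ at the turning points, so $u$ genuinely reverses direction there.

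Equivalently, the orbit is the closed level curve $\{H=c\}$, on which there are no equilibria. It is traversed once per period, and it meets the diagonal $x=y$ exactly twice, at $u=u_\pm$. This confirms the oscillation picture and completes the argument.
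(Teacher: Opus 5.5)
Your proposal is correct and follows the paper's argument: the strict monotonicity of $u\mapsto 1-2H\cot u$ gives $u$ and $r$ the same set of periods, and $T(H)$ is the minimal period of $u$. Your check that $\ddot u\neq 0$ at $u_\pm$ (so $u$ genuinely reverses direction rather than stalling) and your first-return argument for minimality fill in details that the paper simply takes from the discussion after Lemma~\ref{LemUpm}.
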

\begin{proof} Fix the $H\in(H_0,H_{\max})$.
Along the associated solution $(x(t),y(t))$ of~\eqref{E-ODExy}, the variable $u(t)=(x(t)+y(t))/2$ satisfies
\eqref{eq:scalar_reduction_QS} and has the minimal period $T(H)$ given by~\eqref{eq:T_of_H}.
Moreover, by \eqref{eq:r_u_v_system} on this orbit we have
\begin{equation}
r^2(t)=1-2H\cot u(t),\qquad u(t)\in\Bigl(\frac{\pi}{2},\pi\Bigr).
\end{equation}
Since $u\mapsto 1-2H\cot u$ is strictly monotone on $(\pi/2,\pi)$, the functions $u(t)$ and $r(t)$ have the same set of
periods; in particular, $r(t)$ has the minimal period $T(H)$.
\end{proof}

Using \eqref{eq:r_u_v_system}, we obtain the following estimate for the radius vector of $K$.

\begin{Lemma}\label{L-radius}
For every $H\in (H_0,H_{\max})$ the radius vector of $K$ satisfies
\begin{equation}
r(t)\le 1+\sqrt2\qquad\text{for all }t.
\end{equation}
\end{Lemma}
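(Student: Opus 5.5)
We need $r^2 = 1-2H\cot u \le (1+\sqrt2)^2 = 3+2\sqrt2$, i.e. $-H\cot u \le 1+\sqrt2$. On the orbit $u(t)\in[u_-(H),u_+(H)]$, and by Lemma~\ref{LemUpm} we have $u_+(H)<3\pi/4$, so $u\in(\pi/2,3\pi/4)$. There $-\cot u\in(0,1)$ and $H<H_{\max}$, which gives $r^2<1+2H_{\max}=1+3\sqrt3\approx 6.196$. This is a little above $3+2\sqrt2\approx5.828$, so the crude bound does not suffice and the relation between $H$ and $u$ must be used.

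The plan is to use that $u$ ranges only over $[u_-,u_+]$, where $F(u)\ge H$ with $F(u)=2\sin u-\sin 2u$. Hence
\[
-2H\cot u\le -2F(u)\cot u = -2\cot u\,(2\sin u-2\sin u\cos u)=-4\cos u\,(1-\cos u).
\]
This holds because $-\cot u>0$ on $(\pi/2,\pi)$ and $H\le F(u)$ on the orbit. Setting $c=-\cos u\in(0,1)$, the right-hand side is $4c(1+c)$. For $u\le u_+<3\pi/4$ we have $c<\tfrac{\sqrt2}{2}$, so
\[
4c(1+c)<4\cdot\tfrac{\sqrt2}{2}\Bigl(1+\tfrac{\sqrt2}{2}\Bigr)=2\sqrt2+2 .
\]
Therefore
\[
r^2=1-2H\cot u<3+2\sqrt2=(1+\sqrt2)^2,
\]
and so $r<1+\sqrt2$.

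The key steps are:
\begin{enumerate}
\item Confine $u(t)$ to $[u_-(H),u_+(H)]\subset(\pi/2,3\pi/4)$, using Lemma~\ref{LemUpm} together with the fact that $\dot u^2=Q_H(u)\ge0$ forces $F(u)\ge H$ (since $G(u)+H>0$).
\item Replace $H$ by $F(u)$, which is legitimate because the coefficient $-2\cot u$ is positive.
\item Bound the resulting function of $u$, which is monotone increasing in $c=-\cos u$, by its value at $u=3\pi/4$.
\end{enumerate}

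The one point needing care is the first step: the orbit's $u$-values must actually satisfy $H\le F(u)$. This follows from \eqref{eq:scalar_reduction_QS} and \eqref{eq:Q_factor_sec}, and it is valid for every $t$, not just on a monotone branch. Equality in the final bound is never attained, since $u<3\pi/4$ strictly.
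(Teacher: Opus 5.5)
Your proof is correct and is essentially the paper's argument. The paper uses monotonicity of $-\cot u$ to evaluate $r$ at the turning point $u_+(H)$, where $H=F(u_+)$ gives $r_{\max}^2=(1-2\cos u_+)^2$; you instead apply $H\le F(u)$ pointwise to get $r^2\le(1-2\cos u)^2$ along the whole orbit and then use $u<3\pi/4$, which has the minor advantage of not requiring that the orbit actually reach $u_+(H)$ (indeed $H\le F(u)$ is just $\cos v\le 1$ in $H=2\sin u(\cos v-\cos u)$).
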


\begin{proof}
Since $(-\cot u)'=\csc^2u>0$ on $(\pi/2,\pi)$, the 
maximal value $r_{\max}$ of~$r(t)$ along a fixed-$H$ orbit is attained at the same moment as the maximal value of~$u(t)$, that is, when 
$u=u_+(H)< \frac{3\pi}{4}$, see Lemma~\ref{LemUpm}. Using the relation \eqref{eq:r_u_v_system} at $u=u_+(H)$, we obtain
\begin{equation}
r_{\max}^2
=1-2H\cot u_+(H)
=1-4\cos u_+(H)\,\bigl(1-\cos u_+(H)\bigr)
=\bigl(1-2\cos u_+(H)\bigr)^2.
\end{equation}
Since $u_+(H)\in(2\pi/3,3\pi/4)$, one has $\cos u_+(H)<0$, and thus
\begin{equation}
r_{\max}=1-2\cos u_+(H) < 1-2\cos(3\pi/4)=1+\sqrt{2}.\qedhere
\end{equation}
\end{proof}

\section{Proof of the main result}\label{sec:Main_th}

We prove the theorem by contradiction. According to Corollary~\ref{L-center}, we may assume that there exists a centrally symmetric convex body $K$, distinct from a disk, satisfying the hypotheses of Theorem~\ref{thm:main}.

Recall that $\gamma$ is an arc-length parametrization of the boundary of~$K$ (thus $|\gamma'(t)|=1$). 
Then the perimeter $P$ of the body $K$ is the \emph{geometric period} of~$\gamma$, i.e.
\begin{equation}
\gamma(t+P)=\gamma(t)\qquad\text{for all }t\in\mathbb R,
\end{equation}
and $P$ is the smallest positive number with this property.

Let $R_\theta$ denote the rotation by~$\theta$ about the center of symmetry (which we place at the origin).
Consider the set of rotational symmetries
\begin{equation}
\mathrm{Rot}(K):=\{\theta\in[0,2\pi):\ R_\theta(K)=K\}.
\end{equation}
If $K$ is not a disk, $\mathrm{Rot}(K)$ is a finite cyclic group; define the \emph{basic rotation angle}
\begin{equation}
\sigma_K:=\min\bigl(\mathrm{Rot}(K)\cap(0,2\pi)\bigr).
\end{equation}
Correspondingly, define $\eta_K\in(0,P)$ to be the unique number such that
\begin{equation}\label{eq:rotation_shift}
R_{\sigma_K}\bigl(\gamma(t)\bigr)=\gamma(t+\eta_K)\qquad\text{for all }t\in\mathbb R.
\end{equation}
Since $K$ is centrally symmetric, $R_\pi(K)=K$, thus $\pi\in\mathrm{Rot}(K)$ and therefore the order of $\mathrm{Rot}(K)$ is even, that it is equal to $2k$ for some $k\in\mathbb N$. Thus, we have   
\begin{equation}\label{E-sigma_k}
    \sigma_K=\frac{2\pi}{2k}=\frac{\pi}{k}
\qquad\text{and}\qquad
\eta_K=\frac{P}{2k}.
\end{equation}

We establish the perimeter bounds that follow from convexity. Since $K$ contains the inscribed convex hexagons of perimeter 12 and is contained in the disk of radius $1+\sqrt{2}$ (by~Lemma~\ref{L-radius}), we have
\begin{equation}\label{eq:P_bounds}
12\le P\le 2\pi(1+\sqrt2).
\end{equation}
Next, by definition \eqref{eq:rotation_shift}, we have $R_{\sigma_K}(\gamma(t))=\gamma(t+\eta_K)$,
and thus
\begin{equation}
r(t+\eta_K)=|\gamma(t+\eta_K)|
=|R_{\sigma_K}\gamma(t)|
=|\gamma(t)|
=r(t), \quad t \in \mathbb{R}.
\end{equation}
Therefore $\eta_K$ is a period of $r(t)$, and by Lemma~\ref{Period_r} there exists $m\in\N$ such that $\eta_K=m\,T(H)$.
Combining this with $\eta_K=\frac{P}{2k}$, we obtain
\begin{equation}\label{eq:T_quantized}
T(H)=\frac{P}{2km}.
\end{equation}
By Proposition~\ref{prop:T_period},
\begin{equation}\label{eq:T_bounds_main}
\pi\sqrt{\frac32}< T(H)< \frac{2\pi}{\sqrt{(4-\sqrt2)/2}}.
\end{equation}
Using \eqref{eq:P_bounds} and \eqref{eq:T_quantized}, we obtain
\begin{equation}
T(H)\le \frac{2\pi(1+\sqrt2)}{2km}
=\frac{\pi(1+\sqrt2)}{km}.
\end{equation}
Together with the lower bound in \eqref{eq:T_bounds_main}, this gives
\begin{equation}
\pi\sqrt{\frac32}< \frac{\pi(1+\sqrt2)}{km},
\end{equation}
so
\begin{equation}
km<\frac{1+\sqrt2}{\sqrt{3/2}}<2 \quad \quad \Rightarrow k=m=1.
\end{equation}
 Substituting this into \eqref{eq:T_quantized} and using \eqref{eq:P_bounds}, we obtain
\begin{equation}
T(H)=\frac{P}{2}\ge 6,
\end{equation}
which contradicts the upper bound from \eqref{eq:T_bounds_main}:
\begin{equation}
T(H)<\frac{2\pi}{\sqrt{(4-\sqrt2)/2}}<6.
\end{equation}
This contradiction shows that no such non-disk $K$ exists, so $K$ must be a disk.

\section{Remarks and open problems}\label{sec:rem}

A natural direction for future work is to investigate the analogous problem for perimeter densities $\sigma = \frac{1}{N}$ for larger $N$. The analysis becomes substantially more complicated for two related reasons. First, in contrast with the hexagonal case, one cannot in general express the auxiliary angles $\alpha_i(t)$ uniquely in terms of the variables $x_j(t)$. Second, even when such a parametrization is available, one should not expect the full system to collapse to a $2\times 2$ dynamical system. Thus the low-dimensional reduction that makes the present case tractable appears to be special to $\sigma = 1/6$.

Another interesting question concerns the period $T(H)$ of the corresponding Hamiltonian system. In the work of Bracho, Montejano, and Oliveros, a monotonicity of the period was observed numerically, and the same phenomenon appears in our setting as well. However, we do not currently have a rigorous proof of this monotonicity. Establishing such a result would be of independent interest and could provide a more conceptual route to rigidity.

Finally, although the reduced system in the present paper has a natural Hamiltonian structure, we have not exploited this structure in a truly effective way. It would be very interesting to understand whether the Hamiltonian viewpoint can be used more systematically, both to clarify the period function and to develop methods applicable to the higher $N$-gon cases.

\appendix
\section{Bounds on $Q_H(u)$} \label{app:bounds}
We start with a general lemma.

\begin{Lemma}[Parabolic domination]\label{lem:parabolic_domination}
Let $f\in C^2([a,b])$ satisfy $f(a)=f(b)=0$ and let $A\ge 0$.
\begin{itemize}
\item If $f''(x)\ge -2A$ for all $x\in[a,b]$, then $f(x)\le A(x-a)(b-x)$ for all $x\in[a,b]$.
\item If $f''(x)\le -2A$ for all $x\in[a,b]$, then $f(x)\ge A(x-a)(b-x)$ for all $x\in[a,b]$.
\end{itemize}
\end{Lemma}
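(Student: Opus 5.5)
The plan is to compare $f$ with the parabola $p(x):=A(x-a)(b-x)$ through the auxiliary function $g:=f-p$. Since $p(a)=p(b)=0$, we have $g\in C^2([a,b])$ with $g(a)=g(b)=0$. Moreover $p''\equiv-2A$, so $g''=f''+2A$.

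For the first item, the hypothesis $f''\ge -2A$ gives $g''\ge 0$ on $[a,b]$. Thus $g$ is convex on $[a,b]$. A convex function lies below the chord joining its endpoint values, and here that chord is identically zero. Concretely, write $x=(1-\lambda)a+\lambda b$ with $\lambda\in[0,1]$; then
\begin{equation}
g(x)\le (1-\lambda)g(a)+\lambda g(b)=0.
\end{equation}
Hence $f(x)\le p(x)=A(x-a)(b-x)$ for all $x\in[a,b]$.

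For the second item, the hypothesis $f''\le -2A$ gives $g''\le 0$, so $g$ is concave on $[a,b]$. The same chord argument with the inequality reversed yields $g\ge 0$, that is, $f(x)\ge A(x-a)(b-x)$ on $[a,b]$.

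If one prefers not to quote convexity facts, each item can also be proved by a maximum principle argument. In the first case, suppose $g$ has a positive maximum at an interior point $x_0$. Then one can perturb to $g_\varepsilon=g+\varepsilon(x-a)(b-x)$ with small $\varepsilon>0$; this function still has a positive interior maximum, but $g_\varepsilon''\ge -2\varepsilon\cdot(-1)$, which forces $g_\varepsilon''>0$ at that maximum, a contradiction. Given this alternative, there is no real obstacle. The only point needing care is the sign bookkeeping in $p''=-2A$, and the condition $A\ge 0$ is not even used in the argument.
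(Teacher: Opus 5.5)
Your main argument is correct and is essentially the paper's proof: set $p(x)=A(x-a)(b-x)$, note that $f-p$ vanishes at $a,b$ and has nonnegative (resp.\ nonpositive) second derivative, and conclude by convexity (resp.\ concavity) that it lies below (resp.\ above) the zero chord; you are also right that $A\ge 0$ is never used. The optional maximum-principle variant has a sign slip: for $g_\varepsilon=g+\varepsilon(x-a)(b-x)$ one gets $g_\varepsilon''=g''-2\varepsilon$, not $g''+2\varepsilon$, so you should perturb by $g-\varepsilon(x-a)(b-x)$ (giving $g_\varepsilon''\ge 2\varepsilon>0$), but this does not affect the main proof.
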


\begin{proof}
Set $p(x):=A(x-a)(b-x)$, so $p(a)=p(b)=0$ and $p''(x)=-2A$.
Let $h:=f-p$. If $f''\ge -2A$, then $h''\ge 0$ so $h$ is convex; since $h(a)=h(b)=0$, convexity implies $h\le 0$,
i.e.\ $f\le p$. If $f''\le -2A$, then $h$ is concave and $h(a)=h(b)=0$ implies $h\ge 0$, i.e.\ $f\ge p$.
\end{proof}

Next, recall that
\begin{equation}\label{app:factorization}
Q_H(u)=\sin^2 u-\frac{(H+\sin 2u)^2}{4}=\frac{(F(u)-H)\,(G(u)+H)}{4},
\end{equation}
where
\begin{equation}
F(u)=2\sin u-\sin 2u,\qquad G(u)=2\sin u+\sin 2u,
\end{equation}
and 
\begin{equation} 
H\in(H_0,H_{\max}),\qquad
H_0=1+\sqrt2,\qquad
H_{\max}=\frac{3\sqrt3}{2}.
\end{equation}

 \begin{Lemma}\label{lem:Q_parabola_above}
Let $H\in(H_0,H_{\max})$ and let $u_\pm=u_\pm(H)$ be the turning points. Then
\begin{equation}\label{eq:Q_parabola_above}
Q_H(u)\ \le\ \frac{8}{3}\,(u-u_-)(u_+-u)
\qquad\text{for all }u\in[u_-,u_+].
\end{equation}
\end{Lemma}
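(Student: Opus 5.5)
The plan is to apply the first part of Lemma~\ref{lem:parabolic_domination} with $f=Q_H$ on $[a,b]=[u_-,u_+]$ and $A=\tfrac83$. By Lemma~\ref{LemUpm}, $F(u_\pm)=H$, so the factorization \eqref{app:factorization} gives $Q_H(u_-)=Q_H(u_+)=0$. Also $Q_H$ is smooth. It therefore suffices to show
\begin{equation}
Q_H''(u)\ \ge\ -\tfrac{16}{3}\qquad\text{for all }u\in[u_-,u_+].
\end{equation}

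First I compute the second derivative. Differentiating $Q_H(u)=\sin^2u-\tfrac14(H+\sin2u)^2$ gives $Q_H'=\sin2u-(H+\sin2u)\cos2u$. Differentiating again and using $\sin^22u-\cos^22u=-\cos4u$ gives
\begin{equation}
Q_H''(u)=2\cos2u-2\cos4u+2H\sin2u .
\end{equation}
By Lemma~\ref{LemUpm}, $[u_-,u_+]\subset(\pi/2,3\pi/4)$, so $2u\in(\pi,3\pi/2)$ and $\sin2u<0$. Hence the term $2H\sin 2u$ is decreasing in $H$, and I need an upper bound on $H$ in terms of $u$.

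The crude bound $H<H_{\max}=\tfrac{3\sqrt3}{2}$ does not suffice. With $w=2u-\pi\in(0,\pi/2)$ it gives $Q_H''\ge -2\cos w-2\cos2w-3\sqrt3\sin w$. A numerical check suggests this minimum is about $-5.33$, essentially equal to $-16/3$, so there is no room. Instead I use the pointwise constraint from Lemma~\ref{LemUpm}: $F(u)\ge H$ on $[u_-,u_+]$. Because $\sin2u<0$, this gives
\begin{equation}
Q_H''(u)\ \ge\ g(u):=2\cos2u-2\cos4u+2F(u)\sin2u,\qquad F(u)=2\sin u-\sin2u .
\end{equation}
The new bound $g$ no longer depends on $H$.

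It remains to check the one-variable inequality $g\ge -16/3$ on the range where $u$ can lie. I would also use that $u\ge u_-(H)>u_-(H_0)$, where $u_-(H_0)$ solves $F(u)=1+\sqrt2$ on $(\pi/2,2\pi/3)$. Numerically $u_-(H_0)\approx1.82$, so one may restrict to $u\in[u_-(H_0),3\pi/4]$. A spot check at the left end ($w\approx0.5$) gives $g\approx-5.15$. At $u=2\pi/3$ one gets $g=-1+1-\tfrac{9}{2}\cdot\ldots$, that is $-4.5$. So the claim appears to hold with a margin of about $0.15$.

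\emph{Main obstacle.} The hard step is making this last bound rigorous. I would write $g$ as a trigonometric polynomial in $u$, split the interval into a few subintervals, and on each use monotonicity of the individual terms or Lipschitz bounds on $g'$ to certify $g>-16/3$. The small margin near $w\approx0.5$ is where the care is needed, and this is where restricting to $u\ge u_-(H_0)$ must be used.
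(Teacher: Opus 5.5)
Your reduction is the paper's: parabolic domination with $A=\tfrac83$, the second derivative (your $2\cos2u-2\cos4u+2H\sin2u$ equals the paper's $2(1+\cos2u+H\sin2u-2\cos^2 2u)$), and replacing $H$ by $F(u)$ using $\sin2u<0$ and $F\ge H$ on $[u_-,u_+]$. The gap is the final inequality $g\ge-\tfrac{16}{3}$, which is the only non-routine step of the lemma, and you have not proved it. Two point evaluations say nothing about an interval, and the subdivision/Lipschitz certification is only announced, not carried out.

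The missing idea is that $g$ becomes a polynomial in $c=\cos u$. Using $\cos4u+\sin^2 2u=\cos^2 2u$,
\[
g(u)=2\cos2u\,(1-\cos2u)+4\sin u\sin2u=4\sin^2u\,(2\cos u+\cos2u)=-8c^4-8c^3+12c^2+8c-4,
\]
and one checks the exact identity
\[
g(u)+\frac{16}{3}=\frac43\Bigl((1+3c)^2-6(1+c)c^3\Bigr).
\]
The right-hand side is visibly nonnegative for $c\in[-1,0]$, since $1+c\ge0$ and $c^3\le0$; it is in fact strictly positive, because the two terms cannot vanish simultaneously. This is the paper's argument, and it gives the bound on all of $[\pi/2,\pi]$ at once.

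So your claim that the restriction $u\ge u_-(H_0)$ must be used is mistaken. The global minimum of $g$ on $[\pi/2,\pi]$ is about $-5.17$, attained where $\cos u\approx-0.283$ ($u\approx1.86$). That point already lies inside your interval, so the restriction gains nothing. Your numerical plan would work in principle, since the margin is about $0.16$, but it is much clumsier than the identity above.

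Incidentally, the crude bound $H<H_{\max}$ is not actually violated either. The minimum of $-2\cos w-2\cos2w-3\sqrt3\sin w$ on $(0,\pi/2)$ is about $-5.3302>-\tfrac{16}{3}$. With a margin of roughly $0.003$ it is useless in practice, so switching to $F(u)\ge H$ was the right move.
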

\begin{proof}
Since $Q_H(u_\pm)=0$, by Lemma~\ref{lem:parabolic_domination} it suffices to prove
\begin{equation}
Q_H''(u) \ge -\frac{16}{3}
\qquad\text{for all }u\in[u_-,u_+].
\end{equation}
Differentiating $Q_H$ twice, we obtain
\begin{equation}
Q_H''(u)=2\Bigl(1+\cos 2u+H\sin 2u-2\cos^2(2u)\Bigr).
\end{equation}
From Lemma~\ref{LemUpm} for $u\in[u_-,u_+]$ we have $F(u)\geq H$, and also $u\in(\pi/2,3\pi/4)$, so
$\sin 2u\le 0$. This implies
$H\sin 2u\ge F(u)\sin 2u$,
and so
\begin{equation}
Q_H''(u)\ge 2\Bigl(1+\cos 2u+F(u)\sin 2u-2\cos^2(2u)\Bigr).
\end{equation}
We plug $F(u)=2\sin u-\sin 2u$, simplify, and rewrite the right-hand side as follows:
\begin{equation}
Q_H''(u)\ge 4\sin^2u\,(2\cos u+\cos 2u)=
-\frac{16}{3}+\frac43\Bigl((1+3\cos u)^2-6(1+\cos u)\cos^3u \Bigr).
\end{equation}
Since $\cos(u)\leq 0$, we obtain $Q_H''(u)\ge -\frac{16}{3}$.
\end{proof}

\begin{Lemma}\label{lem:Q_parabola_below}
Let $H\in(H_0,H_{\max})$ and let $u_\pm=u_\pm(H)$ be the turning points. Then
\begin{equation}\label{eq:Q_parabola_below}
Q_H(u)\ \ge\ \frac{4-\sqrt2}{2}\,(u-u_-)(u_+-u)
\qquad\text{for all }u\in[u_-,u_+].
\end{equation}
\end{Lemma}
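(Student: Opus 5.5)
The plan is to apply the second part of Lemma~\ref{lem:parabolic_domination} on $[a,b]=[u_-,u_+]$ with $f=Q_H$ and $A=\frac{4-\sqrt2}{2}$, in parallel with the proof of Lemma~\ref{lem:Q_parabola_above}. Since $Q_H(u_\pm)=0$, it suffices to show
\begin{equation}
Q_H''(u)=2\Bigl(1+\cos 2u+H\sin 2u-2\cos^2(2u)\Bigr)\le -(4-\sqrt2)
\qquad\text{for all }u\in[u_-,u_+].
\end{equation}

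First I eliminate $H$. By Lemma~\ref{LemUpm}, $[u_-,u_+]\subset(\pi/2,3\pi/4)$, so $\sin 2u\le 0$ there. In the upper bound the useful inequality was $F(u)\ge H$. Here we need the reverse direction, and it comes from $H>H_0=1+\sqrt2$, which gives $H\sin 2u\le(1+\sqrt2)\sin 2u$. The claim then reduces to an $H$-independent trigonometric inequality for $s=2u\in(\pi,3\pi/2)$:
\begin{equation}
1+\cos s+(1+\sqrt2)\sin s-2\cos^2 s\le -2+\tfrac{\sqrt2}{2}.
\end{equation}

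Next I parametrize $\cos s=-\cos\varphi$ and $\sin s=-\sin\varphi$ with $\varphi\in[0,\pi/2]$. The inequality becomes
\begin{equation}
\Phi(\varphi):=\cos\varphi+(1+\sqrt2)\sin\varphi+1+\cos 2\varphi\ \ge\ 3-\tfrac{\sqrt2}{2}\approx 2.293 .
\end{equation}
I split into two cases.
\begin{itemize}
\item On $[0,\pi/4]$, all three terms $\cos\varphi$, $\sin\varphi$, $\cos2\varphi$ are concave, so $\Phi$ is concave. Its minimum is therefore at an endpoint, and $\Phi(0)=3$, $\Phi(\pi/4)=2+\sqrt2$.
\item On $[\pi/4,\pi/2]$, I drop the nonnegative term $1+\cos2\varphi=2\cos^2\varphi$. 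What remains, $\cos\varphi+(1+\sqrt2)\sin\varphi$, is concave, and its value at both endpoints is $1+\sqrt2$.
\end{itemize}
All these values exceed $3-\frac{\sqrt2}{2}$. Hence $Q_H''\le-(4-\sqrt2)$ on $[u_-,u_+]$, and Lemma~\ref{lem:parabolic_domination} yields \eqref{eq:Q_parabola_below}.

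The main obstacle is the bookkeeping in the last step. The margin is smallest at $s=3\pi/2$, where $\Phi(\pi/2)=1+\sqrt2$ against the target $3-\frac{\sqrt2}{2}$, a gap of only about $0.12$. The concavity split must be placed at $\varphi=\pi/4$, where $\cos 2\varphi$ changes from concave to convex, so that the endpoint checks are exact and no crude bound is lost.
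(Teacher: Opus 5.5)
Your argument is correct, but it takes a different route from the paper's. I checked the formula for $Q_H''$, the step $H\sin 2u\le(1+\sqrt2)\sin 2u$ (which uses $\sin 2u<0$ and $H>H_0$), the substitution giving $\Phi$, the concavity split, and the endpoint values $3$, $2+\sqrt2$, $1+\sqrt2$.

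The paper never bounds $Q_H''$ directly. It uses the factorization $Q_H=\frac14(F-H)(G+H)$ and treats the two factors separately:
\begin{itemize}
\item It bounds $G+H$ below by the constant $G(u_+)+H=4\sin u_+\ge 2\sqrt2$, using that $G$ is decreasing on $(\pi/2,3\pi/4)$.
\item It applies Lemma~\ref{lem:parabolic_domination} only to $F-H$. This rests on the identity $F''=6\sin u-4F\le 6-4H_0$, which uses $F\ge H$ on $[u_-,u_+]$.
\end{itemize}
The constant $\frac{4-\sqrt2}{2}$ is exactly $\frac14\cdot 2\sqrt2\cdot(2\sqrt2-1)$. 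The paper's route needs no trigonometric case analysis, and it shows where the constant comes from.

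You instead apply parabolic domination to $Q_H$ itself, mirroring the proof of Lemma~\ref{lem:Q_parabola_above}. You need only $H>H_0$ and $[u_-,u_+]\subset(\pi/2,3\pi/4)$, at the cost of the two-case concavity check for $\Phi$. Your route also gives a slightly better constant. You in fact show $\Phi\ge 1+\sqrt2$, so $Q_H''\le-2\sqrt2$, and hence $Q_H\ge\sqrt2\,(u-u_-)(u_+-u)$ with $\sqrt2>\frac{4-\sqrt2}{2}$. The paper's factor-by-factor estimate gives the weaker constant. Either constant is enough for the contradiction $T(H)<6$ in Section~\ref{sec:Main_th}.
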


\begin{proof}
Lemma~\ref{LemUpm} implies that 
 $[u_-,u_+]\subset(\pi/2,3\pi/4)$. Since
\begin{equation}
G'(u)=2\cos u+2\cos 2u<0
\qquad\text{on }\Bigl(\frac{\pi}{2},\frac{3\pi}{4}\Bigr),
\end{equation}
the function $G$ is strictly decreasing on $[u_-,u_+]$. Using $F(u_+)=H$, we get
\begin{equation}
G(u_+)+H
=
2\sin u_+ + \sin 2u_+ + (2\sin u_+ - \sin 2u_+)
=
4\sin u_+.
\end{equation} 
Therefore, for all $u\in[u_-,u_+]$,
\begin{equation}\label{eq:G_lower_bound_unified}
G(u)+H\ge G(u_+)+H=4\sin u_+\ge 4\sin\frac{3\pi}{4}=2\sqrt2.
\end{equation} 
To estimate $F(u)-H$ we write
\begin{equation}
F''(u)
=
-2\sin u+4\sin 2u
=
-2\sin u+4(2\sin u-F(u))
=
6\sin u-4F(u).
\end{equation}
We apply the estimate $F\geq H$ on $[u_-,u_+]$ (see Lemma~\ref{LemUpm}) and obtain
\begin{equation}
F''(u)\le 6-4H\le 6-4H_0=-2(2\sqrt2-1) \qquad\text{for all }u\in[u_-,u_+].
\end{equation}
Since $F(u_\pm)-H=0$, Lemma~\ref{lem:parabolic_domination} gives
\begin{equation}
F(u)-H\ge (2\sqrt2-1)(u-u_-)(u_+-u)
\qquad\text{for all }u\in[u_-,u_+].
\end{equation}
Combining this with \eqref{eq:G_lower_bound_unified}, we obtain
\begin{equation}
Q_H(u)
=
\frac{(F(u)-H)(G(u)+H)}{4}
\ge
\frac{4-\sqrt2}{2}\,(u-u_-)(u_+-u),
\end{equation}
which proves \eqref{eq:Q_parabola_below}.
\end{proof}

\bibliographystyle{plain}
\bibliography{Main}

@article {Zindler1921,
    AUTHOR = {Von Zindler, K.},
     TITLE = {{\"Uber} konvexe {G}ebilde},
   JOURNAL = {Monatsh. Math. Phys.},
  FJOURNAL = {Monatshefte f\"ur Mathematik und Physik},
    VOLUME = {31},
      YEAR = {1921},
    NUMBER = {1},
     PAGES = {25--56},
      ISSN = {1812-8076},
   MRCLASS = {99-04},
  MRNUMBER = {1549095},
       DOI = {10.1007/BF01702711},
       URL = {https://doi.org/10.1007/BF01702711},
}

@book {ScottishBook2015,
     TITLE = {The {S}cottish {B}ook},
    EDITOR = {Mauldin, R. D.},
   EDITION = {Second},
      NOTE = {Mathematics from the Scottish Caf\'e{} with selected problems
              from the new Scottish Book,
              Including selected papers presented at the Scottish Book
              Conference held at North Texas University, Denton, TX, May
              1979},
 PUBLISHER = {Birkh\"auser/Springer, Cham},
      YEAR = {2015},
     PAGES = {xvii+322},
      ISBN = {978-3-319-22896-9; 978-3-319-22897-6},
   MRCLASS = {00B25 (01A60 03E15)},
  MRNUMBER = {3242261},
MRREVIEWER = {Godofredo\ Iommi Amun\'ategui},
       DOI = {10.1007/978-3-319-22897-6},
       URL = {https://doi.org/10.1007/978-3-319-22897-6},
}

@article{Auerbach1938,
author = {Auerbach, H.},
journal = {Studia Mathematica},
language = {fre},
number = {1},
pages = {121-142},
title = {Sur un problème de M. Ulam concernant l'équilibre des corps flottants},
url = {http://eudml.org/doc/218616},
volume = {7},
year = {1938},
}

@article {Montejano1974,
    AUTHOR = {Montejano, L.},
     TITLE = {On a problem of {U}lam concerning a characterization of the
              sphere},
   JOURNAL = {Studies in Appl. Math.},
  FJOURNAL = {Studies in Applied Mathematics},
    VOLUME = {53},
      YEAR = {1974},
     PAGES = {243--248},
      ISSN = {0022-2526,1467-9590},
   MRCLASS = {52A10},
  MRNUMBER = {355822},
MRREVIEWER = {F.\ A.\ Valentine},
       DOI = {10.1002/sapm1974533243},
       URL = {https://doi.org/10.1002/sapm1974533243},
}

@article {Wegner2003,
    AUTHOR = {Wegner, F.},
     TITLE = {Floating bodies of equilibrium},
   JOURNAL = {Stud. Appl. Math.},
  FJOURNAL = {Studies in Applied Mathematics},
    VOLUME = {111},
      YEAR = {2003},
    NUMBER = {2},
     PAGES = {167--183},
      ISSN = {0022-2526,1467-9590},
   MRCLASS = {76B10 (76B75 76B99)},
  MRNUMBER = {1990237},
       DOI = {10.1111/1467-9590.t01-1-00231},
       URL = {https://doi.org/10.1111/1467-9590.t01-1-00231},
}

@article {BrachoMontejanoOliveros2001,
    AUTHOR = {Bracho, J. and Montejano, L. and Oliveros, D.},
     TITLE = {A Classification Theorem for {Z}indler {C}arrousels},
   JOURNAL = {Journal of Dynamical and Control Systems},
  FJOURNAL = {Journal of Dynamical and Control Systems},
    VOLUME = {7},
      YEAR = {2001},
    NUMBER = {3},
     PAGES = {367--384},
       DOI = {10.1023/A:1013099830164},
       URL = {https://doi.org/10.1023/A:1013099830164},
}

@article {BrachoMontejanoOliveros2004,
    AUTHOR = {Bracho, J. and Montejano, L. and Oliveros, D.},
     TITLE = {Carousels, {Z}indler curves and the floating body problem},
   JOURNAL = {Period. Math. Hungar.},
  FJOURNAL = {Periodica Mathematica Hungarica. Journal of the J\'anos Bolyai
              Mathematical Society},
    VOLUME = {49},
      YEAR = {2004},
    NUMBER = {2},
     PAGES = {9--23},
      ISSN = {0031-5303,1588-2829},
   MRCLASS = {52A10 (34A12 53A04)},
  MRNUMBER = {2106462},
       DOI = {10.1007/s10998-004-0519-6},
       URL = {https://doi.org/10.1007/s10998-004-0519-6},
}

@article{Wegner2007,
  title={Floating bodies of equilibrium in 2{D}, the tire track problem and electrons in a parabolic magnetic field},
  author={Wegner, F.},
  journal={arXiv preprint physics/0701241},
  year={2007}
}

@article {Falconer1983,
    AUTHOR = {Falconer, K. J.},
     TITLE = {Applications of a result on spherical integration to the
              theory of convex sets},
   JOURNAL = {Amer. Math. Monthly},
  FJOURNAL = {American Mathematical Monthly},
    VOLUME = {90},
      YEAR = {1983},
    NUMBER = {10},
     PAGES = {690--693},
      ISSN = {0002-9890,1930-0972},
   MRCLASS = {52A20 (43A90 52A22)},
  MRNUMBER = {723941},
MRREVIEWER = {V.\ P.\ Yashnikov},
       DOI = {10.2307/2323535},
       URL = {https://doi.org/10.2307/2323535},
}

@article{Schneider1970,
  title={Functional equations connected with rotations and their geometric applications},
  author={Schneider, R.},
  journal={Enseign. Math.(2)},
  volume={16},
  pages={297--305},
  year={1970}
}

@article {Ryabogin2023,
    AUTHOR = {Ryabogin, D.},
     TITLE = {On bodies floating in equilibrium in every orientation},
   JOURNAL = {Geom. Dedicata},
  FJOURNAL = {Geometriae Dedicata},
    VOLUME = {217},
      YEAR = {2023},
    NUMBER = {4},
     PAGES = {Paper No. 70, 17},
      ISSN = {0046-5755,1572-9168},
   MRCLASS = {52A38 (52A20)},
  MRNUMBER = {4601992},
MRREVIEWER = {Alina\ Stancu},
       DOI = {10.1007/s10711-023-00807-w},
       URL = {https://doi.org/10.1007/s10711-023-00807-w},
}

@article {Ryabogin2022,
    AUTHOR = {Ryabogin, D.},
     TITLE = {A negative answer to {U}lam's problem 19 from the {S}cottish
              {B}ook},
   JOURNAL = {Ann. of Math. (2)},
  FJOURNAL = {Annals of Mathematics. Second Series},
    VOLUME = {195},
      YEAR = {2022},
    NUMBER = {3},
     PAGES = {1111--1150},
      ISSN = {0003-486X,1939-8980},
   MRCLASS = {52A38 (52A20)},
  MRNUMBER = {4413748},
MRREVIEWER = {Andrea\ Colesanti},
       DOI = {10.4007/annals.2022.195.3.5},
       URL = {https://doi.org/10.4007/annals.2022.195.3.5},
}

@book {RyaboginSurvey,
     TITLE = {On flotation, stability and related questions: {A} survey},
    EDITOR = {M. Alfonseca and D. Ryabogin and A. Stancu and V. Yaskin},
   EDITION = {First},
 PUBLISHER = {Springer-Verlag, Berlin.},
      YEAR = {2026},
    VOLUME={32},
    series={to appear in New Probes into
Discrete and Convex Geometry (J. Pach and G. Toth, eds.)},
}

@book {Poussin1925,
     TITLE = {Leçons de mécanique analytique},
    EDITOR = {De La Vallée Poussin, C. J.},
 PUBLISHER = {Paris (in French)},
      YEAR = {1925},
    VOLUME={II},
}

@book {Zhukovsky1936,
     TITLE = {Classical mechanics},
    EDITOR = {Zhukovsky, N. E.},
 PUBLISHER = {Paris (in French)},
      YEAR = {1936},
}

@book {Dupin1822,
     TITLE = {Applications de géométrie et de méchanique à la marine, aux ponts-et-chaussées},
    EDITOR = {Dupin, C.},
 PUBLISHER = {Paris (in French)},
      YEAR = {1822},
}

\end{document}